\documentclass[a4paper]{amsart}
\usepackage{tikz-cd}
\usepackage{amsmath,amsthm,amssymb,amscd}
\usepackage{mathtools}

\usepackage{enumitem}
\setlist[enumerate,1]{label=(\roman*), font=\normalfont}

\newcommand{\Z}{\mathbb{Z}}

\newcommand{\R}{\mathbb{R}}

\newcommand{\N}{\mathbb{N}}

\renewcommand{\Z}{\mathbb{Z}}

\newcommand{\Top}{\mathrm{T\ddot{o}p}}

\newtheorem{theorem}{Theorem}[section]
\newtheorem{lemma}[theorem]{Lemma}

\newtheorem{proposition}[theorem]{Proposition}
\newtheorem{claim}{Claim}

\theoremstyle{definition}
\newtheorem{definition}[theorem]{Definition}

\newtheorem{question}[theorem]{Question}

\newtheorem{remark}[theorem]{Remark}

\newcommand{\actson}{\curvearrowright}
\newcommand{\sub}{\subseteq}
\DeclareMathOperator{\Per}{Per}
\DeclareMathOperator{\Sym}{Sym}
\DeclareMathOperator{\diam}{diam}
\DeclarePairedDelimiter\set{\{}{\}}
\DeclarePairedDelimiter\nm{\lVert}{\rVert}
\newcommand{\eps}{\epsilon}
\newcommand{\eqrel}[1]{\,#1\,}
\newcommand{\cl}[2][]{\overline{#2}^{#1}}
\newcommand{\cS}{\mathcal{S}}
\renewcommand{\And}{\text{ and }}

\author{Marcin Sabok}
\address{Marcin Sabok, Department of Mathematics and Statistics, McGill
  University, 805, Sherbrooke Street West Montreal, Quebec,
  Canada H3A 2K6 and Institute of Mathematics, Polish
  Academy of Sciences, \'Sniadeckich 8, 00-655 Warszawa,
  Poland} 
\email{marcin.sabok@mcgill.ca}

\author{Todor Tsankov}
\address{Todor Tsankov, Institut de Math\'ematiques de Jussieu--PRG,
  Universit\'e Paris Diderot, 75205 Paris \textsc{cedex} 13}
\email{todor@math.univ-paris-diderot.fr}

\thanks{The first author was partially supported by NSERC and the
  Polish Ministry of Science and Higher Education (MNiSW) through the
  grant Mobilno\'s\'c Plus. The second author was partially supported
  by the ANR contracts GrupoLoco (ANR-11-JS01-008) and GAMME
  (ANR-14-CE25-0004).}

\title[Topological conjugacy of Toeplitz subshifts]{On the complexity of topological conjugacy of Toeplitz subshifts}

\begin{document}

\begin{abstract}
  In this paper, we study the descriptive set theoretic complexity of
  the equivalence relation of conjugacy of Toeplitz subshifts of a
  residually finite group $G$. On the one hand, we show that if
  $G = \Z$, then topological conjugacy on Toeplitz subshifts with
  separated holes is amenable. In contrast, if $G$ is non-amenable,
  then conjugacy of Toeplitz $G$-subshifts is a non-amenable
  equivalence relation. The results were motivated by a general
  question, asked by Gao, Jackson and Seward, about the complexity of
  conjugacy for minimal, free subshifts of countable groups.
\end{abstract}

\maketitle

\section{Introduction}

The theory of definable equivalence relations offers tools for
classifying the complexity of equivalence relations arising from
various isomorphism problems. An important class of Borel equivalence
relations is given by the \emph{countable} ones, i.e., those with
countable classes. It is a classical result of Feldman and Moore that
every such equivalence relation arises as the orbit equivalence
relation of a Borel action of a countable group; and thus, from the
beginning, the theory is intimately connected to that of dynamical
systems from where it has borrowed most of its tools and techniques.
The descriptive set theoretic approach to those equivalence relations
has been developed over the last twenty years by Dougherty, Jackson,
Kechris, Louveau, Hjorth, Thomas and others (see, e.g., \cite{jkl,djk,hk}).

The natural comparison of equivalence relation is given by \emph{Borel
  reducibility} and the simplest ones are those which are
\textit{smooth}, i.e., admit real numbers as complete invariants (or,
equivalently, admit a Borel transversal). The next level of the
complexity hierarchy is formed by the \textit{hyperfinite} ones,
equivalently, those induced by Borel actions of the group of integers
$\Z$. There is also a \emph{universal} countable Borel equivalence
relation, which is maximal in the quasi-order of Borel reducibility,
an example is the orbit equivalence relation $F_2 \actson 2^{F_2}$.

In topological dynamics, one of the most commonly considered types of
dynamical systems are the \emph{subshifts} (also known as
\textit{Bernoulli subflows} or \emph{symbolic dynamical systems}). For
a countable group $G$, the \emph{Bernoulli shift} is the action of $G$
on $2^G$ defined by the formula: $(g \cdot x)(h) = x(g^{-1}h)$ for all
$g, h \in G$, $x \in 2^G$. A \textit{$G$-subshift} is a closed
nonempty subset $S \subseteq 2^G$ which is invariant under the action
of $G$. The natural isomorphism relation of subshifts is
\emph{topological conjugacy}: two subshifts $S$ and $T$ are
\textit{conjugate} if there is a homeomorphism $f \colon S \to T$
which commutes with the action of $G$. This is the equivalence
relation that we study in this paper. Often, special classes of
subshifts are of interest in dynamics. A subshift $S$ is
\textit{minimal} if it does not contain any proper subshift, or
equivalently, if every orbit is dense. It is \textit{free} if for any
$x\in S$ and any $g\in G$ different from $1_G$, we have
$g \cdot x \neq x$.

It is worth noting that while many countable equivalence relations
arise naturally from group actions, some do not, and their study is
usually more difficult because most of the available tools are
dynamical in nature and require the presence of a (natural) group
action. Some notable examples where there is no natural group action
that gives the equivalence relation are Turing equivalence,
isomorphism of (various classes of) finitely generated groups, and
topological conjugacy of subshifts. (However, for subshifts, there is
still a group action present that can be exploited: see
Section~\ref{sec:non-amenable-case}.)

The recent monograph of Gao, Jackson, and Seward \cite{gjs} studies
the complexity of topological conjugacy of free, minimal subshifts.
(In fact, before their construction, it was an open problem whether
such subshifts necessarily exist for every countable $G$.) It follows
essentially from a classical result of Curtis, Hedlund and Lyndon (see
\cite{lind.marcus} or \cite[Lemma 9.2.1]{gjs}) that for any countable
group $G$, topological conjugacy of $G$-subshifts is a countable Borel
equivalence relation. Gao, Jackson and Seward showed \cite[Corollary
1.5.4]{gjs} that if $G$ is infinite, this equivalence relation is not
smooth, and that if $G$ is locally finite, then it is hyperfinite
\cite[Theorem 1.5.6]{gjs}. They also pose the general question
\cite[Problem 9.4.11]{gjs} to determine the complexity of this
equivalence relation for an arbitrary countable group $G$. This was an
important motivating question for our work and in
Theorem~\ref{th:non-amenable}, we provide a partial answer.

Clemens~\cite{clemens} proved that the topological conjugacy of
$\Z$-subshifts is a universal countable Borel equivalence relation.
However, his construction produces subshifts that are far from minimal
and it remains an open question whether isomorphism of minimal
$\Z$-subshifts is universal. This question is connected with the
conjecture of Thomas \cite[Conjecture 1.2]{thomas} that isomorphism of
finitely generated, amenable, simple groups is universal. It follows
from the results of Matui~\cite{matui}, Giordano, Putnam,
Skau~\cite{gps} and a recent result of Juschenko and Monod~\cite{jm}
that the computation of the topological full group of a minimal
$\Z$-subshift provides a reduction from the (flip-) conjugacy of
minimal $\Z$-subshifts to isomorphism of finitely generated, simple,
amenable groups. Another related result was recently proved by
Williams~\cite{jay}, who showed that isomorphism of finitely generated,
solvable groups is weakly universal.

The focus of this paper is studying the complexity of the conjugacy
equivalence relation for the class of \emph{Toeplitz subshifts} of
residually finite groups. This class of subshifts is well-known and
appears in many contexts. For example, Downarowicz~\cite{downarowicz}
showed that that any Choquet simplex can be realized as the simplex of
invariant measures of a Toeplitz subshift. (This result was recently
generalized to arbitrary amenable, residually finite groups by Cortez
and Petite~\cite{Cortez2014}.) An important feature of Toeplitz words
is that they can be constructed in stages, which allows a fair amount
of control. We briefly recall the classical definition for $G = \Z$ here and
postpone the general one for residually finite groups to
Section~\ref{sec:Toeplitz-subshifts}.

A word $x \in 2^\Z$ is \emph{Toeplitz} if every symbol occurs periodically,
i.e., for every $n \in \N$, there exists $k$ such that $x(n + ki) =
x(n)$ for all $i \in \Z$. A subshift $S \sub 2^\Z$ is \emph{Toeplitz}
if it is equal to the closure of the orbit of some Toeplitz word. It
is easy to check that every Toeplitz subshift is minimal.

The topological conjugacy relation for Toeplitz $\Z$-subshifts has
been studied by Downarowicz, Kwiatkowski and Lacroix \cite{dkl} and it
essentially follows from their results that topological conjugacy of
\emph{pointed} Toeplitz flows (i.e., the relation $E$ on Toeplitz
words, such that $x\mathrel{E}y$ if there is a an $\Z$-equivariant
homeomorphism from $\overline{\Z \cdot x}$ to $\overline{\Z \cdot y}$
that maps $x$ to $y$) is a hyperfinite equivalence relation. The
latter seems to indicate that the topological conjugacy relation for
Toeplitz $\Z$-subshifts should not be too complicated. However, we
have only been able to treat the case of Toeplitz subshifts \emph{with
  separated holes}, a special but important class. The first of our
main results is the following.
\begin{theorem}
  \label{th:z-subshifts}
  For $G = \Z$, the equivalence relation of conjugacy on Toeplitz
  subshifts \emph{with separated holes} is amenable and therefore
  hyperfinite $\mu$-a.e. for every Borel probability measure $\mu$ on
  the set of subshifts.
\end{theorem}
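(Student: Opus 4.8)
The plan is to realize the conjugacy relation as the orbit equivalence relation of a Borel action of a countable amenable group, and then to invoke the Connes--Feldman--Weiss theorem for the final clause. First I would fix a Borel parametrization of Toeplitz subshifts with separated holes by their defining data (the sequence of periods together with the level-by-level hole fillings), so that the space $X$ of such subshifts is standard Borel; by the Curtis--Hedlund--Lyndon theorem (see \cite{lind.marcus} or \cite[Lemma~9.2.1]{gjs}) every conjugacy is a sliding block code, so there are only countably many of them and the conjugacy relation $E_c$ is a countable Borel equivalence relation.

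Next I would pass to the maximal equicontinuous factor. Each Toeplitz $\Z$-subshift $S$ admits a canonical factor map onto an odometer $O_S$, and any conjugacy $S \to S'$ descends to an isomorphism of the associated odometer systems, which is necessarily an affine map, i.e.\ an element of $\aut(O) \ltimes O$. Since this group is compact and essentially abelian it is amenable, and the affine maps actually arising from sliding block codes form a countable subgroup $\Gamma_0 \le \aut(O) \ltimes O$, which is therefore amenable. This reduces the problem, over each odometer isomorphism type, to understanding how much extra freedom a conjugacy retains once its action on the base odometer has been fixed.

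The heart of the argument --- and the step where the separated holes hypothesis is essential --- is a rigidity lemma controlling this residual freedom. I would show that for $n$ large enough that the $n$-th period dwarfs the radius of the sliding block code and the level-$n$ holes are separated by more than that radius, a conjugacy inducing a given affine map is determined by bounded local data, with only finitely many admissible choices at each level. Packaging these choices, the base-fixing conjugacies form a locally finite group $\Gamma_1$ (a direct limit of finite groups), which is amenable; here the Downarowicz--Kwiatkowski--Lacroix analysis \cite{dkl} of pointed Toeplitz flows supplies the per-basepoint combinatorial bookkeeping. This is the main obstacle I expect: without separation a sliding block code can recombine the hole fillings in an uncontrolled fashion and the residual group need not be amenable, which is consistent with the conjugacy problem being genuinely harder for general Toeplitz subshifts.

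Finally I would assemble these pieces. Organizing the conjugacies into a Borel action of a countable group $\Gamma \actson X$, the group $\Gamma$ fits into an extension $1 \to \Gamma_1 \to \Gamma \to \Gamma_0 \to 1$ (with $\Gamma_1$ the base-fixing conjugacies and $\Gamma_0$ the induced affine maps), so being an extension of an amenable group by an amenable group, $\Gamma$ is amenable, and by construction its orbit equivalence relation is exactly $E_c$. Since the orbit equivalence relation of a Borel action of a countable amenable group is amenable, $E_c$ is amenable. The Connes--Feldman--Weiss theorem then yields that $E_c$ is hyperfinite $\mu$-almost everywhere for every Borel probability measure $\mu$ on $X$, as claimed; note that this route gives only $\mu$-a.e.\ hyperfiniteness and not Borel hyperfiniteness, matching the wording of the statement.
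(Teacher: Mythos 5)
There is a genuine gap, and it sits exactly at the step you describe as ``assembling the pieces.'' Conjugacies between \emph{distinct} subshifts do not compose to form a group: they form a groupoid (this is precisely the point of Section~\ref{sec:groupoid-viewpoint} of the paper, and the introduction explicitly lists topological conjugacy of subshifts among the equivalence relations \emph{not} given by a natural group action). So there is no countable group $\Gamma$ ``organizing the conjugacies'' and no exact sequence $1 \to \Gamma_1 \to \Gamma \to \Gamma_0 \to 1$; your $\Gamma_0$ and $\Gamma_1$ are well-defined only as the automorphism group of a single subshift and its base-fixing subgroup, not as quotient and kernel of something acting on the whole space $X$. What does exist is a Borel cocycle $\alpha_0$ from the conjugacy groupoid into the compact group $\hat G \cong \aut$-affine data of the odometer, whose kernel generates a hyperfinite (indeed, increasing union of finite) equivalence relation by Downarowicz--Kwiatkowski--Lacroix. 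But that structure --- ``hyperfinite-by-compact'' --- holds for \emph{all} Toeplitz $\Z$-subshifts, not just those with separated holes, and the paper states explicitly that it does not know how to extract from it even non-universality, let alone amenability. This is also a sanity check against your argument: if the extension heuristic worked, it would prove amenability for all Toeplitz $\Z$-subshifts, which the authors leave open. The obstruction is that the ``quotient'' is a cocycle into an uncountable compact group, not an action of a countable amenable group, so neither the extension-closure of amenable groups nor the orbit-equivalence-relation criterion applies; and your rigidity lemma only sharpens the kernel (which is already tame without separated holes), so the separated-holes hypothesis is not doing the work where the work is actually needed.

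For comparison, the paper's proof is a direct verification of $1$-amenability of the relation $E$. For each period $p$ it counts, for a pair $(S,T)$, the number $B^p(S,T)$ of residues $i \bmod p$ such that $T = \cl{\Z\cdot\hat\pi(x+i)}$ for some permutation $\pi$ of the $p$-blocks of a Toeplitz word $x\in S$, and takes $\lambda^p_S = B^p_S/\nm{B^p_S}_1$. A block code of radius $r$ transforms $x+i$ into $y+i$ by a block permutation whenever $i$ is at distance more than $r$ from every $p$-hole, and $A_{p,i}$ is constant as $i$ moves across periodic positions; the separated-holes hypothesis enters only to guarantee that the exceptional $i$'s form an arbitrarily small proportion, giving $\nm{\lambda^p_S - \lambda^p_T}_1 \to 0$ along a tower of periods. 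If you want to salvage a structural approach along your lines, you would need a genuinely new idea for handling the compact quotient, not a refinement of the kernel.
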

We postpone the definitions to Section~\ref{sec:z-subshifts} and just
recall that an amenable equivalence relation is hyperfinite a.e. with
respect to any probability measure but it is an open problem whether
it must be hyperfinite everywhere. We do not know whether the above
equivalence relation is hyperfinite.

The second part of the paper deals with Toeplitz subshifts of
residually finite, non-amenable groups. For those, we prove that
topological conjugacy is somewhat complicated.
\begin{theorem}
  \label{th:non-amenable}
  If $G$ is a non-amenable, residually finite group, then the
  equivalence relation of conjugacy on the set of free, Toeplitz
  $G$-subshifts is not hyperfinite.
\end{theorem}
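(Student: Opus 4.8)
The plan is to show non-hyperfiniteness by reducing a known non-hyperfinite equivalence relation into conjugacy of free Toeplitz $G$-subshifts. Since $G$ is non-amenable and residually finite, I want to exploit the residual finiteness to build Toeplitz subshifts whose combinatorial structure faithfully encodes an auxiliary $G$-action, and exploit non-amenability to certify that the resulting relation cannot be hyperfinite. Let me think about how to do this concretely.

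Let me recall the standard route to proving non-hyperfiniteness. A countable Borel equivalence relation that is hyperfinite must be amenable (in the sense of Connes–Feldman–Weiss, a.e. with respect to every measure). So the cleanest strategy is to produce, inside the conjugacy relation on free Toeplitz $G$-subshifts, a subrelation (or a Borel reduction from) an equivalence relation that is known to be non-amenable. The natural candidate, for a non-amenable group $G$, is the orbit equivalence relation $E_G$ of the free Bernoulli action $G \curvearrowright 2^G$ (or the free part thereof), which is non-amenable precisely because $G$ is non-amenable; this is a theorem of Zimmer/Connes–Feldman–Weiss type. So the goal reduces to: embed $E(G, 2^G)$ (or some non-amenable $G$-action orbit relation) into topological conjugacy of free Toeplitz $G$-subshifts.

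Here is the mechanism I would use. Since $G$ is residually finite, fix a decreasing sequence of finite-index normal subgroups $G = G_0 \supseteq G_1 \supseteq \cdots$ with $\bigcap_n G_n = \{1_G\}$; the quotients $G/G_n$ furnish the "periods" needed to build Toeplitz $G$-words in stages, exactly as in Section~\ref{sec:Toeplitz-subshifts}. The idea is to attach to each point $y \in 2^G$ a free Toeplitz $G$-subshift $S_y$ in a $G$-equivariant Borel way, so that the local data of $y$ gets written into the holes of the Toeplitz construction at successive stages. The key design requirement is that the assignment $y \mapsto S_y$ be (i) Borel, (ii) produce free, Toeplitz subshifts, and (iii) satisfy $S_{g \cdot y} $ is conjugate to $S_y$ for all $g$ — since $\overline{G \cdot y}$-orbit data should be a conjugacy invariant — while (iv) being rigid enough that $S_y$ conjugate to $S_z$ forces $y$ and $z$ to lie in the same $G$-orbit (or same orbit-equivalence class of the encoding action). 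Property (iv) is what makes the reduction faithful; one typically arranges it by making the conjugacy invariants of $S_y$ (such as the asymptotic structure of the periodic parts, the array of maximal equicontinuous factors, or the combinatorics of how the finite pieces glue) recover the $G$-orbit of $y$.

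The hard part will be the rigidity direction (iv): ensuring that the Toeplitz construction is sufficiently canonical that topological conjugacy of the subshifts can only come from an actual relation between the underlying labels, and does not introduce unexpected extra identifications coming from the freedom in choosing sliding-block codes. I expect this to require a careful "marker" or "rigidity" argument, in the spirit of Gao–Jackson–Seward's constructions, showing that any conjugacy between $S_y$ and $S_z$ respects the hierarchical hole-structure and hence is detected at the level of the encoded data. Concretely, I would build into the construction unambiguous combinatorial markers (patterns that can appear at only one scale and position modulo the periods $G/G_n$) so that any conjugacy must map markers to markers, synchronize the two hierarchies, and thereby read off a $G$-translation relating $y$ and $z$. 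Once the reduction $y \mapsto S_y$ from the free part of $G \curvearrowright 2^G$ to conjugacy of free Toeplitz $G$-subshifts is established as a Borel reduction, non-hyperfiniteness follows: the source relation is non-amenable because $G$ is non-amenable (so by Connes–Feldman–Weiss it is non-hyperfinite), and Borel reducibility of a non-hyperfinite relation into our relation (as a subrelation, via the reduction, using that the source relation has a measure witnessing non-amenability that pushes forward appropriately) forces our relation to be non-hyperfinite as well.
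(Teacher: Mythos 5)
Your overall strategy --- exhibit a non-amenable relation inside conjugacy of free Toeplitz $G$-subshifts and invoke Connes--Feldman--Weiss --- is the right one, and your observation that the quotients $G/G_n$ supply the periods for a staged Toeplitz construction matches the paper. But the proposal has a genuine gap at exactly the point you flag as ``the hard part'': the rigidity direction (iv). You need that a topological conjugacy $S_y \cong S_z$ --- i.e., an arbitrary pair of mutually inverse block codes --- forces $y$ and $z$ to be orbit-related, and you offer only the intention to ``build in unambiguous combinatorial markers'' without any construction or argument that a sliding-block code must respect the hierarchical hole structure. Controlling all conjugacies between Toeplitz subshifts is precisely the kind of rigidity that is hard to obtain (it is closely related to the open problems about the complexity of conjugacy of minimal subshifts discussed in the introduction), so this step cannot be waved through; as written, the reduction is a plan, not a proof.

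The paper's actual argument sidesteps this difficulty entirely, and the difference is worth internalizing. Instead of a Borel reduction from the Bernoulli action, one parametrizes Toeplitz subshifts by $\theta \colon Y \times Z \to S(G)$ (with $Y \subseteq \hat G$ and $Z$ encoding the values on the successive coset ``rings'' $A_n$) and uses the \emph{right} shift action $S \mapsto S \cdot g$, which is manifestly contained in the conjugacy relation since $x \mapsto x \cdot g$ is an isomorphism $S \to S \cdot g$. One then pushes forward an invariant product measure and shows the stabilizers are a.e.\ contained in the amenable normal subgroup $Z_f(G)$; by the standard fact that a measure-preserving action with amenable stabilizers whose orbit relation is hyperfinite has amenable acting group (and that subrelations of hyperfinite relations are hyperfinite), non-hyperfiniteness follows. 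The only ``injectivity'' needed is that $\theta(y_1,z_1) \neq \theta(y_2,z_2)$ as \emph{subsets of $2^G$} when $z_1 \neq z_2$ --- an elementary subword argument --- rather than rigidity up to conjugacy. If you want to salvage your approach, the realistic fix is to replace your requirement (iv) by this much weaker set-theoretic injectivity and to replace the Bernoulli reduction by the stabilizer argument for the right action.
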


The proof of this theorem proceeds by constructing a probability
measure $\mu$ on the set of Toeplitz subshifts which is invariant
under a suitable action of the group $G$, included in the equivalence
relation; then we show that the stabilizers of points are amenable and
conclude that the equivalence relation is not $\mu$-amenable and thus
not hyperfinite.

In the last section of the paper, reinterpreting a result of
Downarowicz, Kwiatkowski, and Lacroix, we indicate how topological
conjugacy of Toeplitz $\Z$-subshifts is naturally generated by an
action of a groupoid which is ``hyperfinite-by-compact.'' This seems
to indicate that the equivalence relation is somewhat simple; however,
we are not even able to prove that it is not universal. The following
question remains open:
\begin{question}
  Is topological conjugacy of Toeplitz $\Z$-subshifts hyperfinite? Is
  this true for an arbitrary residually finite, amenable group?
\end{question}

\subsection*{Acknowledgements} This work was initiated
during the stay of the first author at the University Paris
7. He would like to thank the logic group in Paris for their
hospitality. Both authors would like to thank Boban
Veli\v{c}kovi\'c for many valuable discussions. The first
author is also indebted to Tomasz Downarowicz for valuable
comments.


\section{Toeplitz subshifts}\label{sec:Toeplitz-subshifts}

In this section, we recall the definition and collect some basic
properties of Toeplitz subshifts for residually finite groups. We also
establish some basic definability properties that will be needed
later.

Let $G$ be a countable group. Recall that the \emph{profinite topology}
on $G$ is the one with the basis of neighborhoods at $1_G$ consisting
of all finite index subgroups (or, equivalently, all finite index
\emph{normal} subgroups). $G$ is called \emph{residually finite} if
$\set{1_G}$ is closed in the profinite topology, or, equivalently, if
the profinite topology is Hausdorff. For the rest of the paper, $G$
will always be a residually finite group.

The \emph{profinite completion} of $G$, denoted by $\widehat G$, is
the completion of the group uniformity defined by this topology;
equivalently $\widehat G = \varprojlim G/H$, where the limit is taken
over all finite index normal subgroups of $G$. This profinite
completion is metrizable if $G$ has only countably many subgroups of
finite index (for example, if it is finitely generated); even though
it is not strictly necessary for what we are doing (as we can always
pass to suitable metrizable quotients), we will sometimes assume this
for convenience.

Recall that the \emph{left shift action} $G \actson 2^G$ is defined by
$(g \cdot x)(h) = x(g^{-1}h)$. A closed subset $S \subseteq 2^G$ is
called a \textit{subshift} if it is invariant under this action.

\begin{definition}[Krieger \cite{krieger}]
  A word $x \in 2^G$ is called \textit{Toeplitz} if $x \colon G \to 2$
  is a continuous map for the profinite topology on $G$ (where
  $2 = \{0, 1\}$ is taken to be discrete). A subshift $S \subseteq2^G$
  is \textit{Toeplitz} if there exists a Toeplitz word $x$ such that
  $S=\overline{G \cdot x}$.
\end{definition}
Generalizing a well-known fact for $G = \Z$, Krieger showed that every
Toeplitz subshift is minimal (see \cite[Corollaire~2.5]{krieger}).

Let
\begin{equation*}
  S(G) = \{S \sub 2^G : S \text{ is closed and $G$-invariant}\}
\end{equation*}
be the set of all $G$-subshifts. This is naturally a compact space
with the Vietoris topology. It is easy to check that the family of
minimal subshifts as well as that of free subshifts form Borel subsets
of $S(G)$. In what follows, we see that being Toeplitz is also a Borel
condition and establish a simple selection lemma that will be used later.

We need the following well-known definability property of Baire
category notions for which we have not been able to find a suitable
reference. It is a slight generalization of \cite[16.1]{Kechris1995}
with the same proof. If $X$ is a Polish space, $F(X)$ denotes the
Effros Borel space of closed subsets of $X$. Recall that
$\exists^* x \in F$ means ``for non-meagerly many $x$ in $F$.''
\begin{proposition}
  \label{p:definability-Baire}
  \begin{enumerate}
  \item \label{i:1} Let $(X, \cS)$ be a measurable space, $Y$ a Polish space and let $\Phi
    \colon X \to F(Y)$ be a measurable function, where $F(Y)$ denotes
    the Effros Borel space of closed subsets of $Y$. Let $A
    \sub X \times Y$ be a measurable set (where $Y$ is equipped with its
    Borel $\sigma$-algebra) and $U \sub Y$ an open set. Then the set
    \begin{equation}
      \label{eq:non-mgr}
      \set{x \in X : U \cap \Phi(x) = \emptyset \text{ or } A_x \cap
        \Phi(x) \text{ is non-meager in } U \cap \Phi(x)}
    \end{equation}
    is measurable.
   
  \item \label{i:2} Let $X$ be a Polish space and $A \sub X$ be Borel. Then the set
  \begin{equation*}
    \set{F \in F(X) : \exists^* x \in F \ x \in A}
  \end{equation*}
  is Borel.
  \end{enumerate}
\end{proposition}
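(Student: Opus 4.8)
The plan is to deduce \eqref{i:2} from \eqref{i:1} and to prove \eqref{i:1} by a version of the Vaught-transform computation behind \cite[16.1]{Kechris1995}, relativized so that meagerness is measured inside the variable closed set $\Phi(x)$ and inside $U$. For the reduction of \eqref{i:2}, I would apply \eqref{i:1} with $(F(X), \text{Effros Borel})$ in the role of $(X,\cS)$, with $Y := X$, with $\Phi := \id_{F(X)}$ (which is measurable), with $U := X$, and with the measurable set $F(X) \times A$ in the role of $A$. Since here $U \cap \Phi(F) = F$ and the section of $F(X)\times A$ over $F$ is $A$, the set produced by \eqref{i:1} is $\set{F : F = \emptyset} \cup \set{F : A \cap F \text{ is non-meager in } F}$; intersecting it with the Effros-Borel set $\set{F : F \neq \emptyset}$ removes the empty set (in which $A\cap F$ is vacuously meager) and leaves exactly $\set{F \in F(X) : \exists^* x \in F \ x \in A}$. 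So everything reduces to \eqref{i:1}.

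For \eqref{i:1}, fix a countable basis $\set{V_n : n \in \N}$ of nonempty open subsets of $Y$. For each fixed $x$ the section $A_x$ is Borel in $Y$, hence $A_x \cap \Phi(x)$ has the Baire property in the Polish space $\Phi(x)$, and $\set{V_n \cap \Phi(x) : V_n \sub U}$ is a basis of the open subspace $U \cap \Phi(x)$. Since a set with the Baire property is non-meager exactly when it is comeager in some basic nonempty open subset, I get the pointwise equivalence that $A_x \cap \Phi(x)$ is non-meager in $U \cap \Phi(x)$ iff there is $n$ with $V_n \sub U$, $V_n \cap \Phi(x) \neq \emptyset$, and $A_x$ comeager in $V_n \cap \Phi(x)$. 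Writing $A^{*V} := \set{x : A_x \text{ is comeager in } V \cap \Phi(x)}$ (vacuously true when $V \cap \Phi(x) = \emptyset$) and $A^{\triangle V} := \set{x : A_x \text{ is non-meager in } V \cap \Phi(x)}$, the set \eqref{eq:non-mgr} then equals $\set{x : U \cap \Phi(x) = \emptyset} \cup \bigcup_{n : V_n \sub U}\bigl(A^{*V_n} \cap \set{x : V_n \cap \Phi(x) \neq \emptyset}\bigr)$. Because $\Phi$ is measurable into the Effros Borel space, $\set{x : W \cap \Phi(x) \neq \emptyset}$ is measurable for every open $W$, so it suffices to show that $A^{*V_n}$ is measurable for every $n$.

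The main work is to prove, by induction on the Borel complexity of $A \in \cS \otimes \mathcal{B}(Y)$, that $A^{\triangle V_n}$ (equivalently $A^{*V_n}$) is measurable for all $n$. The two transforms are interdefinable through the basis: using that $\set{V_m \cap \Phi(x) : V_m \sub V}$ is a basis of $V \cap \Phi(x)$ and that a Baire-property set is comeager precisely when it is non-meager in every basic nonempty open subset, one gets $A^{*V} = \bigcap_{m : V_m \sub V}\bigl(\set{x : V_m \cap \Phi(x) = \emptyset} \cup A^{\triangle V_m}\bigr)$, while dually $A^{\triangle V_m} = \set{x : V_m \cap \Phi(x) \neq \emptyset} \setminus (A^c)^{*V_m}$. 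The rule $\bigl(\bigcup_k A^k\bigr)^{\triangle V} = \bigcup_k (A^k)^{\triangle V}$ (a countable union is meager iff each summand is) gives closure under countable unions; the two displayed identities read together give closure under complementation; and the base case is a rectangle $A = S \times V_k$ with $S \in \cS$, where $A^{\triangle V_m} = S \cap \set{x : V_k \cap V_m \cap \Phi(x) \neq \emptyset}$ is measurable because a nonempty relatively open set is never meager. As these rectangles generate $\cS \otimes \mathcal{B}(Y)$, the induction covers every measurable $A$.

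The step I expect to be most delicate is the complementation identity for $A^{*V}$: the honest point is that replacing the $x$-dependent inclusion $V_m \cap \Phi(x) \sub V \cap \Phi(x)$ by the fixed topological inclusion $V_m \sub V$ is legitimate precisely because the latter still yields a basis of the subspace $V \cap \Phi(x)$, so that the category characterization applies uniformly in $x$. Once this relativized basis bookkeeping is set up correctly, the remaining computations are a direct transcription of the classical Vaught-transform argument, and the measurability of the auxiliary sets $\set{x : W \cap \Phi(x) \neq \emptyset}$ is the only place where the Effros measurability of $\Phi$ enters.
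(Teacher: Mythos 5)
Your proof is correct and follows essentially the same route as the paper, which likewise reduces \eqref{i:2} to \eqref{i:1} and proves \eqref{i:1} by the Montgomery--Novikov/Vaught-transform induction of \cite[16.1]{Kechris1995}: rectangles as the base case, the union rule, and complementation via the dual comeager transform over a countable basis. Your version merely spells out the relativization to $U\cap\Phi(x)$ and the handling of the case $U\cap\Phi(x)=\emptyset$ more explicitly than the paper's sketch does.
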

\begin{proof}
  \ref{i:1}. The proof goes exactly as in \cite[16.1]{Kechris1995}. If we let
  $A_U$ denote the set defined in \eqref{eq:non-mgr}, one checks that 
  \begin{itemize}
  \item for all $S \in \cS$, $V \sub Y$ open
    \begin{equation*}
      \begin{split}
      (S \times V)_U = \set{x \in X : U &\cap \Phi(x) = \emptyset
        \text{ or } \\
        & (x \in S \And U \cap V \cap \Phi(x) \neq \emptyset)};        
      \end{split}
    \end{equation*}
    
  \item $(\bigcup_n A_n)_U = \bigcup_n (A_n)_U$;
    
  \item $(\sim A)_U = \sim \bigcap_{U_n \sub U} A_{U_n}$, where the
    intersection is over all $U_n \sub U$ from a fixed countable base
    of $Y$.    
  \end{itemize}

  \ref{i:2}. This follows from \ref{i:1}.
\end{proof}

If $x$ is a Toeplitz word and $H \leq G$ is a finite index subgroup,
we let
\begin{equation*}
  \Per_H(x) = \set{g \in G : x|_{Hg} \text{ is constant}}.
\end{equation*}
The fact that $x$ is Toeplitz translates to $\bigcup_H \Per_H(x) = G$.
$H$ is called an \emph{essential group of periods} for $x$ if for all
$g \notin H$, $\Per_H(x) \nsubseteq \Per_H(g \cdot x)$.

The \emph{maximal equicontinuous factor} (\emph{m.e.f.} for short) of a
topological dynamical system $G \actson X$ is the factor generated by
all equicontinuous factors of $X$. For a Toeplitz subshift $S$, this
is always of the form $G \actson \varprojlim G/H_n$, where $\set{H_n}$
is a decreasing sequence of essential groups of periods (see
\cite[Proposition~7]{Cortez2008}). Note that every such system can
also be written as $G \actson \widehat G/K$, where
$K = \bigcap_n \cl{H_n}$ with the closures taken in $\widehat G$. We
have the following folklore lemma.
\begin{lemma}
  \label{l:Top-comeager}
  Let $S$ be a Toeplitz subshift. Then the set of Toeplitz words in
  $S$ is dense $G_\delta$.
\end{lemma}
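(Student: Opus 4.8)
The plan is to treat density and the $G_\delta$ property separately. \emph{Density} is the easy half: if $S = \cl{G \cdot x}$ with $x$ Toeplitz, then every translate $g \cdot x$ is again Toeplitz, since $x \colon G \to 2$ is continuous for the profinite topology and left translation $h \mapsto g^{-1}h$ is a homeomorphism of $G$ for that topology, so $g \cdot x = x \circ (h \mapsto g^{-1}h)$ is continuous as well. Thus the whole orbit $G \cdot x$ consists of Toeplitz words, and it is dense in $S$ by the definition of $S$ as an orbit closure. Hence the Toeplitz words are dense in $S$.

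For the $G_\delta$ property I would pass to the maximal equicontinuous factor. Writing the m.e.f.\ of $S$ as $\Omega = \varprojlim G/H_n$ for a decreasing sequence $(H_n)$ of essential, normal, finite-index groups of periods, with factor map $\pi \colon S \to \Omega$, the key is the folklore characterization that a point $y \in S$ is Toeplitz if and only if its fiber $\pi^{-1}(\pi(y))$ is a singleton. Granting this, the set of Toeplitz words in $S$ equals $\pi^{-1}(\Omega_0)$, where $\Omega_0 = \set{\omega \in \Omega : |\pi^{-1}(\omega)| = 1}$. It then suffices to show that $\Omega_0$ is $G_\delta$ in $\Omega$, for then $\pi^{-1}(\Omega_0)$ is $G_\delta$ in $S$ by continuity of $\pi$.

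To see that $\Omega_0$ is $G_\delta$, fix an exhaustion $F_1 \sub F_2 \sub \cdots$ of $G$ by finite sets. For finite $F \sub G$ and $p \in 2^F$ the cylinder $[p] = \set{y \in S : y|_F = p}$ is clopen, and since $S$ is compact and $\Omega$ Hausdorff the map $\pi$ is closed; hence for open $V \sub S$ the set $\set{\omega : \pi^{-1}(\omega) \sub V} = \Omega \setminus \pi(S \setminus V)$ is open. Consequently $U_F \coloneqq \bigcup_{p \in 2^F} \set{\omega : \pi^{-1}(\omega) \sub [p]}$ is open as a finite union of open sets, and $U_F$ consists of exactly those $\omega$ whose fiber is constant on the window $F$. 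Since a fiber is a singleton precisely when it is constant on every finite window, $\Omega_0 = \bigcap_k U_{F_k}$ is $G_\delta$, as required.

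The main obstacle is the characterization identifying the Toeplitz words with $\pi^{-1}(\Omega_0)$. Its substance is the essential-period description of the m.e.f.\ (\cite[Proposition~7]{Cortez2008}), from which one extracts that any two points of a single fiber $\pi^{-1}(\omega)$ agree at every position $g$ lying in $\Per_{H_n}(y)$ for some $n$ and some $y$ in that fiber. Granting this, if $y$ is Toeplitz then $\bigcup_n \Per_{H_n}(y) = G$, so every other point of its fiber agrees with $y$ everywhere and $\pi^{-1}(\pi(y)) = \set{y}$; conversely, if $y$ is not Toeplitz there is a position lying in no $\Per_{H_n}(y)$, and using minimality together with essentiality of the $H_n$ one produces a second point in the fiber of $\pi(y)$ differing from $y$ at that position. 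I would assemble this equivalence from the cited m.e.f.\ structure rather than reprove it from scratch.
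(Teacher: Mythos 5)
Your proof is correct and follows essentially the same route as the paper: both reduce to the characterization (from Cortez--Downarowicz--Maass) of Toeplitz words as the points with singleton fibers over the maximal equicontinuous factor, observe that the set of such points is $G_\delta$, and get density from the definition of a Toeplitz subshift. The only (cosmetic) difference is that you certify the $G_\delta$ property via clopen cylinders and closedness of the factor map, whereas the paper writes the same set as $\pi^{-1}\big(\bigcap_{\eps>0}\bigcup\set{U \text{ open}: \diam(\pi^{-1}(U))<\eps}\big)$.
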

\begin{proof}
  Let $Y$ be the m.e.f. of $S$ and $\pi \colon S \to Y$ be the factor
  map. By \cite[Theorem~2]{Cortez2008}, the set of Toeplitz words in
  $S$ can be written as
  \begin{equation*}
    \big\{ x \in S : \pi^{-1}(\set{\pi(x)}) = \set{x} \big\}.
  \end{equation*}
  This can be written as $\pi^{-1}(A)$, where
  \begin{equation*}
    A = \bigcap_{\eps > 0} \bigcup \set{ U \sub Y \text{ open}: \diam(\pi^{-1}(U)) < \eps}
  \end{equation*}
  and this is clearly $G_\delta$. It is also dense by the definition of
  a Toeplitz subshift.
\end{proof}

Denote
\begin{equation*}
  \Top(G) = \set{S \in S(G) : S \text{ is a Toeplitz subshift}}.
\end{equation*}
Lemma~\ref{l:Top-comeager} allows us to build the following selector
map that will be used throughout the paper.
\begin{proposition}
  \label{p:Toeplitz-selector}
  Let $G$ be a residually finite group. Then there exists a Borel map
  $\tau \colon \Top(G) \to 2^G$ such that for all $S \in \Top(G)$,
  $\tau(S) \in S$ and $\tau(S)$ is a Toeplitz word.
\end{proposition}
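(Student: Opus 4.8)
The plan is to realize $\tau$ as a Borel uniformization of the relation
\[
  R = \set{(S,x) \in \Top(G) \times 2^G : x \in S \And x \text{ is Toeplitz}},
\]
whose vertical section $R_S = \set{x \in S : x \text{ is Toeplitz}}$ is, by Lemma~\ref{l:Top-comeager}, a dense $G_\delta$ and in particular comeager in $S$. First I would verify that $R$ is Borel: membership $x \in S$ is Borel in the Effros structure on the closed subsets of the compact metrizable space $2^G$, and being Toeplitz is a Borel condition on $2^G$ (it is the set of $x$ with $\bigcup_H \Per_H(x) = G$). Thus the task reduces to selecting, in a Borel way, a point from each section of a Borel set all of whose sections are comeager.

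The mechanism is a category (Banach--Mazur) selection, and this is exactly what Proposition~\ref{p:definability-Baire} is designed to feed. Fix an enumeration $G = \set{g_n : n \in \N}$ and, for a finite partial function $s$ on $G$, let $[s] \sub 2^G$ be the corresponding basic clopen set. Applying the first assertion of Proposition~\ref{p:definability-Baire} to the inclusion $\Phi(S) = S$, the set $\sim R$, and the open set $U = [s]$ shows that the predicate ``$S \cap [s] = \emptyset$ or $R$ is comeager in $S \cap [s]$'' is Borel in $S$ for each fixed $s$ (and ``$S \cap [s] \neq \emptyset$'' is plainly Borel). One then builds $\tau(S)$ by a recursion in which, at stage $n$, the value $\tau(S)(g_n)$ is chosen to keep the shrinking box $S \cap [s_n]$ nonempty with $R$ still comeager inside it, always taking the least admissible value; since at each step the choice is governed by the above Borel predicates, $S \mapsto \tau(S)$ is Borel.

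The main obstacle --- and the reason this is not quite routine --- is that comeagerness of $R \cap S \cap [s_n]$ at every finite stage forces the limit only into $S$, not into $R_S$: a single point can lie in the meager complement $S \setminus R_S$ at each stage, so the limit word need not be Toeplitz. To overcome this I would interleave into the recursion a diagonalization against the nowhere dense layers of $S \setminus R_S$. Concretely, using the definability of the category quantifiers one extracts, Borel in $S$, a decreasing sequence of sets $O^S_m$ that are open and dense in $S$ with $\bigcap_m O^S_m \sub R_S$ (the complements $S \setminus O^S_m$ being the nowhere dense pieces of the meager set $S \setminus R_S$), and at stage $m$ one additionally arranges $\overline{S \cap [s_{m+1}]} \sub O^S_m$. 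This is possible because each $O^S_m$ is dense, and it forces $\tau(S) \in \bigcap_m O^S_m \sub R_S$, so the selected word is genuinely Toeplitz. The whole construction is an instance of the general fact that a Borel set with comeager vertical sections admits a Borel uniformization, with Proposition~\ref{p:definability-Baire} supplying the definability that keeps every step Borel.
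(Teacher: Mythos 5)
Your argument is correct and matches the paper's: the proof in the paper is a one-line citation of Lemma~\ref{l:Top-comeager}, Proposition~\ref{p:definability-Baire}, and the large-section (comeager-section) uniformization theorem \cite[18.6]{Kechris1995}, which is precisely the Borel selection from comeager sections that you carry out by hand. The only difference is that you unwind that citation into an explicit Banach--Mazur-style recursion with a diagonalization against the nowhere dense layers, which is exactly the content of the cited theorem.
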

\begin{proof}
  This follows from Lemma~\ref{l:Top-comeager},
  Proposition~\ref{p:definability-Baire}, and
  \cite[18.6]{Kechris1995}.
\end{proof}

Now we can easily deduce the following.
\begin{lemma}
  \label{l:mef-Borel}
  Let $G$ be a residually finite group such that $\widehat G$ is
  metrizable. The map $\Top(G) \to F(\widehat G)$ which associates to
  a Toeplitz subshift $S$ (the conjugacy class of) the subgroup
  $K \leq \widehat G$ such that the m.e.f. of $S$ is isomorphic to
  $\widehat G/K$ is Borel.
\end{lemma}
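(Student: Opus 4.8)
The plan is to use the Borel selector $\tau$ from Proposition~\ref{p:Toeplitz-selector} to replace each subshift $S$ by the distinguished Toeplitz word $x = \tau(S) \in S$, and then to reconstruct a representative of the conjugacy class of $K$ directly from the period structure of $x$. Recall from the discussion preceding Lemma~\ref{l:Top-comeager} that the m.e.f.\ of $S$ is $G \actson \widehat G/K$ with $K = \bigcap_n \cl{H_n}$ for some decreasing sequence $\set{H_n}$ of essential groups of periods of $x$. Since $\widehat G$ is metrizable profinite, $K$ is determined by its images $\pi_N(K)$ under the projections $\pi_N \colon \widehat G \to G/N = \widehat G/\cl{N}$, as $N$ ranges over the countably many finite index normal subgroups of $G$; indeed $K = \bigcap_N \pi_N^{-1}(\pi_N(K))$, and the cosets of the open subgroups $\cl{N}$ form a countable clopen basis of $\widehat G$. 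Because the Effros Borel structure on $F(\widehat G)$ is generated by the conditions ``$K_S$ meets a given basic open set,'' the map $S \mapsto K_S$ will be Borel once I show that, for every such $N$ and every $g \in G$, the condition $gN \in \pi_N(K)$ (equivalently $K_S \cap g\cl{N} \neq \emptyset$) is Borel in $S$; as $\tau$ is Borel, it suffices to make this Borel in the word $x = \tau(S)$.

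The bookkeeping reduces to a single combinatorial object. For a finite index subgroup $H$, put $\mathcal F(x) = \set{H : \cl{H} \supseteq K}$. Since $\widehat G$ is profinite and its open subgroups correspond bijectively (via $U \mapsto U \cap G$, with $G$ dense) to the finite index subgroups of $G$, we have $K = \bigcap \set{\cl{H} : H \in \mathcal F(x)}$, and the family $\set{\cl{H} : H \in \mathcal F(x)}$ is closed under finite intersections, hence downward directed. By the standard fact that a continuous image of a downward-directed intersection of compact sets equals the intersection of the images, $\pi_N(K) = \bigcap_{H \in \mathcal F(x)} \pi_N(\cl{H}) = \bigcap_{H \in \mathcal F(x)} HN/N$, a finite intersection of subgroups of the finite group $G/N$. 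Consequently $gN \in \pi_N(K)$ holds iff $g \in HN$ for every $H \in \mathcal F(x)$, which is a countable Boolean combination (over the countably many $H$) of the predicates $H \in \mathcal F(x)$. Thus everything comes down to proving that, for each fixed finite index subgroup $H$, the predicate $H \in \mathcal F(x)$, i.e.\ $\cl{H} \supseteq K$, is Borel in $x$.

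For the latter I would pass through essential groups of periods. The map $x \mapsto \Per_H(x)$ is Borel, since for each $g$ the condition $g \in \Per_H(x)$, namely $x(hg) = x(g)$ for all $h \in H$, is a countable intersection of clopen conditions on $x$; hence ``$H'$ is an essential group of periods of $x$,'' being a countable conjunction over $g \notin H'$ of the Borel conditions $\Per_{H'}(x) \nsubseteq \Per_{H'}(g \cdot x)$, is also Borel in $x$. The bridge I need is
\[
\cl{H} \supseteq K \iff H \supseteq H'_1 \cap \cdots \cap H'_k \text{ for some essential groups of periods } H'_1, \dots, H'_k \text{ of } x.
\]
Granting this, $H \in \mathcal F(x)$ is a countable union, over finite tuples, of the (fixed) containment together with Borel essentiality conditions, hence Borel, which finishes the argument. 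The forward implication follows by compactness from $K = \bigcap \set{\cl{H'} : H' \text{ essential}}$ together with $\cl{H'_1} \cap \cdots \cap \cl{H'_k} = \cl{H'_1 \cap \cdots \cap H'_k}$; the reverse uses that each essential group of periods $H'$ yields a finite equicontinuous factor $G/H'$ of $S$, which therefore factors through the m.e.f.\ and forces $\cl{H'} \supseteq K$. I expect this last structural input --- the identification of $K$ as exactly the intersection of the closures of the essential groups of periods of the chosen word, and in particular that every essential group of periods dominates $K$ --- to be the main obstacle, and the place where the results of Cortez cited before Lemma~\ref{l:Top-comeager} must be invoked; the remaining descriptive-set-theoretic part (the reduction to images in finite quotients and the definability of the period conditions) is routine.
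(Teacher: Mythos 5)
Your argument is correct in substance but follows a genuinely different route from the paper's. The paper's proof is very short: after selecting $x = \tau(S)$, it invokes Cortez to choose \emph{in a Borel way} a specific sequence $L_n(x)$ of essential groups of periods with $\bigcap_n \cl{L_n} = K$, and then observes that $\set{L_n}_n \mapsto \bigcap_n \cl{L_n}$ is Borel into $F(\widehat G)$; the definability burden is thus placed on the canonicity of Cortez's construction of the sequence. You avoid selecting any sequence at all: you recover $K$ from its images in the finite quotients $G/N$ (which is what the Effros Borel structure sees, via the clopen basis of cosets $g\cl{N}$), and you reduce everything to the pointwise predicate $\cl{H} \supseteq K$, which you then characterize combinatorially through essential groups of periods. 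This buys a more transparent descriptive-set-theoretic argument --- every quantifier is visibly countable and every atomic condition visibly clopen in $x$ --- at the price of a stronger structural input than the paper quotes: you need not only that \emph{some} sequence of essential groups of periods intersects down to $K$, but that \emph{every} essential group of periods $H'$ of $x$ satisfies $\cl{H'} \supseteq K$, i.e., that $K = \bigcap \set{\cl{H'} : H' \text{ essential for } x}$. You correctly identify this as the crux; it is true, being the analogue of the classical fact for $\Z$ that every essential period divides some member of the period structure, and your sketch via the finite equicontinuous factor $G/H'$ works provided you normalize that factor map to send $x$ to the identity coset (otherwise you only obtain $K \subseteq \cl{gH'g^{-1}}$ for some $g$, a containment up to conjugacy, which would still suffice for the lemma as stated but would complicate your bookkeeping). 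The remaining steps --- the identity $\cl{H_1} \cap \cl{H_2} = \cl{H_1 \cap H_2}$ for finite index subgroups, the directed-intersection computation of $\pi_N(K)$, the compactness argument in the forward implication of your bridge, and the Borelness of the essentiality predicate --- all check out.
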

\begin{proof}
  By Proposition~\ref{p:Toeplitz-selector}, for a given Toeplitz
  subshift $S$, we can choose in a Borel way a Toeplitz word
  $x \in S$. By \cite{Cortez2008}, we can choose in a Borel way a
  sequence $L_n(x)$ of finite index subgroups of $G$, which are
  essential periods of $x$ and such that, posing
  $K = \bigcap_n \cl{L_n}$, we have that the m.e.f. of $S$ is
  isomorphic to $\widehat G / K$. Now it is easy to check that the map
  that associates to the sequence $\set{L_n}_n$ the intersection
  $\bigcap_n \cl{L_n}$ is Borel.
\end{proof}

\begin{proposition}
  \label{p:Toeplitz-Borel}
  Let $G$ be a residually finite group, $\widehat G$ the profinite
  completion of $G$ and $K \leq \widehat G$ a closed subgroup. Then
  the set $\Top(G)$ is Borel and if $\widehat G$ is metrizable,
  \begin{equation*}
    \Top(G, K) = \set{S \in K(2^G) : S \text{ is a Toeplitz subshift with m.e.f. }
      \widehat G/K}
  \end{equation*}
  is also Borel.
\end{proposition}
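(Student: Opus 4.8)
The plan is to first isolate the Borel set of Toeplitz \emph{words} inside $2^G$, and then characterize Toeplitz \emph{subshifts} by a minimality condition together with a category quantifier, so that Proposition~\ref{p:definability-Baire} applies. First I would check that
\begin{equation*}
  \mathrm{Toep}(G) = \set{x \in 2^G : x \text{ is a Toeplitz word}}
\end{equation*}
is Borel (in fact $F_{\sigma\delta}$). Since a countable group has only countably many finite index normal subgroups, the defining condition $\bigcup_H \Per_H(x) = G$ rewrites as
\begin{equation*}
  \forall g \in G \ \exists N \trianglelefteq_{\mathrm{fi}} G \ \big( x \text{ is constant on } Ng \big),
\end{equation*}
a countable combination of clopen conditions, hence Borel.

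Next I would record the characterization
\begin{equation*}
  \Top(G) = \set[\big]{S \in S(G) : S \text{ is minimal}} \cap \set[\big]{S \in S(G) : \exists^* x \in S \ x \in \mathrm{Toep}(G)}.
\end{equation*}
For $\subseteq$: every Toeplitz subshift is minimal by Krieger's theorem, and by Lemma~\ref{l:Top-comeager} its Toeplitz words are comeager, in particular non-meager. For $\supseteq$: if $S$ is minimal and contains non-meagerly many Toeplitz words, then it contains at least one Toeplitz word $x$, and minimality forces $S = \cl{G \cdot x}$, so $S$ is Toeplitz. (Minimality cannot be dropped: the union of two distinct Toeplitz subshifts has comeager Toeplitz words yet is not minimal.) Both sets on the right are Borel — minimality is Borel as noted in the text, and the second set is Borel by part~\ref{i:2} of Proposition~\ref{p:definability-Baire} applied to the Borel set $\mathrm{Toep}(G)$. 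Hence $\Top(G)$ is Borel.

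For the second claim, assume $\widehat G$ metrizable and let $\kappa \colon \Top(G) \to F(\widehat G)$ be the Borel map of Lemma~\ref{l:mef-Borel}, so that the m.e.f.\ of $S$ is isomorphic to $\widehat G/\kappa(S)$. Using the standard fact that $\widehat G/K_1 \cong \widehat G/K_2$ as $G$-systems exactly when $K_1$ and $K_2$ are conjugate in $\widehat G$, we obtain
\begin{equation*}
  \Top(G, K) = \set[\big]{S \in \Top(G) : \kappa(S) \in C_K}, \qquad C_K = \set[\big]{g K g^{-1} : g \in \widehat G}.
\end{equation*}
Because $\widehat G$ is compact and metrizable and conjugation $g \mapsto g K g^{-1}$ is continuous into the hyperspace $F(\widehat G)$, the set $C_K$ is compact, hence Borel; pulling it back along the Borel map $\kappa$ shows that $\Top(G, K)$ is Borel.

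I expect the main obstacle to be the passage from ``$S$ contains a Toeplitz word'' — which is only \emph{analytic} a priori, being a projection of a Borel set — to a genuinely Borel condition. This is precisely what the combination of Lemma~\ref{l:Top-comeager} (comeagerness of the Toeplitz words, which lets one replace the existential quantifier by the category quantifier $\exists^*$) and part~\ref{i:2} of Proposition~\ref{p:definability-Baire} is designed to overcome. The remaining verifications — Borelness of $\mathrm{Toep}(G)$, the minimality characterization, and compactness of $C_K$ — are routine.
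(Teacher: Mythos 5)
Your proof is correct and follows essentially the same route as the paper: characterize $\Top(G)$ via the category quantifier $\exists^*$ over Toeplitz words (made Borel by Proposition~\ref{p:definability-Baire}), then handle $\Top(G,K)$ by pulling back along the map of Lemma~\ref{l:mef-Borel}. In fact you are slightly more careful than the paper, whose displayed biconditional $S \in \Top(G) \iff \exists^* x \in S\ (x \text{ Toeplitz})$ omits the intersection with the (Borel) set of minimal subshifts that your union-of-two-Toeplitz-subshifts example shows is genuinely needed, and whose second claim leaves implicit the reduction to the conjugacy class $C_K$ that you spell out.
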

\begin{proof}
  We have
  \begin{equation*}
      S \in \Top(G) \iff \exists^* x \in S \ x \text{ is Toeplitz}
  \end{equation*}
  and applying Proposition~\ref{p:definability-Baire} yields that
  $\Top(G)$ is a Borel set.

  The second statement follows from Lemma~\ref{l:mef-Borel}.
\end{proof}


\section{$\Z$-subshifts}
\label{sec:z-subshifts}

In this section, we consider $\Z$-subshifts and prove Theorem~\ref{th:z-subshifts}.

Let $x \in 2^\Z$ be a Toeplitz word and $p \in \N$. Denote by $\Per_p(x)$ 
the subset of $\Z / p\Z$ defined as follows:
\[
\Per_p(x) = \set{i + p \Z : x(i + kp) = x(i) \text{ for all } k \in \Z}.
\]
Let $H_p(x) = (\Z / p\Z) \setminus \Per_p(x)$. The elements of
$H_p(x)$ are called \emph{$p$-holes} for $x$.

We say that $x$ \emph{has separated holes} \cite{Downarowicz2005} if
\[
\lim_{p \to \infty} \min \set{|i - j| : i, j \in H_p(x), i \neq j} = \infty.
\]
Having separated holes is a property of the subshift that does not
depend on the choice of the word $x$. Thus, by the arguments in
Section~\ref{sec:Toeplitz-subshifts}, the condition of having
separated holes defines a Borel subset of the set of all subshifts.
All shifts with separated holes are regular and have topological
entropy $0$.

Next we recall the definition of an \emph{amenable} equivalence
relation. Let $E$ be a countable equivalence relation on the standard
Borel space $X$. If $x \in X$ and $f \colon E \to \R$ is a function,
denote by $f_x$ the function $[x]_E \to \R$ defined by
$f_x(y) = f(x, y)$. $E$ is called \emph{($1$-)amenable} if there exist
positive Borel functions $\lambda^n \colon E \to \R$ such that
\begin{itemize}
\item $\lambda^n_x \in \ell^1([x]_E)$, $\nm{\lambda^n_x}_1 = 1$;
  
\item $\lim_{n \to \infty} \nm{\lambda^n_x - \lambda^n_y}_1 = 0$ for
  all $(x, y) \in E$.
\end{itemize}
By a theorem of Connes, Feldman and Weiss~\cite{Connes1981} (see also
\cite{Kechris2004}), if $\mu$ is any probability measure on $X$ and
$E$ is amenable, then it is hyperfinite $\mu$-almost everywhere. It is
an open question whether every amenable equivalence relation is
hyperfinite. See \cite{jkl} for more details on amenable
equivalence relations in the Borel setting.

The goal of this section is to prove the following theorem.
\begin{theorem}
  \label{th:amenable}
  The equivalence relation of isomorphism of Toeplitz $\Z$-subshifts
  with separated holes is amenable.
\end{theorem}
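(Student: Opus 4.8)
The plan is to exhibit the isomorphism relation $E$ as \emph{hyperfinite-by-compact} and then produce the witnessing functions $\lambda^n$ by averaging. Throughout I would use the selector $\tau$ of Proposition~\ref{p:Toeplitz-selector} to attach to each subshift $S$ a Toeplitz word $\tau(S)\in S$, and Lemma~\ref{l:mef-Borel} to attach to $S$ its maximal equicontinuous factor, an odometer $Y_S=\widehat G/K_S$ (for $G=\Z$, determined by a supernatural number, equivalently by periods $p_1\mid p_2\mid\cdots$), in a Borel way. Since the m.e.f. is a conjugacy invariant, conjugate subshifts have isomorphic bases, so I may organize the argument over the Borel set of subshifts with a fixed odometer type $Y=\widehat\Z/K$.

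First I would pin down the structure of an individual conjugacy. By Curtis--Hedlund--Lyndon any conjugacy $f\colon S\to T$ is a sliding block code, and it descends to a homeomorphism $\bar f\colon Y_S\to Y_T$ of the bases commuting with $+1$; since an odometer is minimal and equicontinuous, $\bar f$ must be translation by a single group element $t\in Y$. Thus every conjugacy carries a \emph{compact datum} $t\in Y$ together with a finitary \emph{array datum} describing how the holes $\Per_{p}(x)$ and their fillings are matched. Here the separated-holes hypothesis is decisive: as $p_n\to\infty$ the holes modulo $p_n$ become pairwise far apart, so a block code of fixed width can interact with at most one hole at a time and the combinatorial matching is recovered from finite windows. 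This is exactly the regime in which the results of Downarowicz, Kwiatkowski and Lacroix apply, giving that the \emph{pointed} conjugacy relation is hyperfinite.

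Next I would turn this into a Borel subequivalence relation $F\subseteq E$ and verify that $F$ is hyperfinite. Informally, $S\mathrel{F}T$ holds if $S$ and $T$ are conjugate by a code whose base translation $t$ is trivial (modulo the periods already accounted for), so that $F$ retains only the array datum; the analysis of Downarowicz, Kwiatkowski and Lacroix, made uniform in $S$ via $\tau$, exhibits $F$ as an increasing union of finite Borel equivalence relations. The remaining degrees of freedom are precisely the base translations $t\in Y$, which form the compact part: the quotient of $E$ by $F$ is governed by the compact abelian group $Y$ acting on $F$-classes, and $Y$ is amenable because compact groups carry an invariant (Haar) mean. Concretely I would assemble the witnessing functions as follows. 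Let $\mu^n\colon F\to\R$ be Følner functions witnessing amenability of the hyperfinite $F$. For the compact direction I use the uniform probability measure on the finite quotient $\Z/p_n\Z$ of $Y$ as a discrete approximation to Haar measure on $Y$, and I set $\lambda^n_S$ to be the convolution of $\mu^n_S$ against this discretized Haar weight, spread over the finitely many $F$-classes reached by the corresponding translations. Invariance along $F$ is inherited from $\mu^n$, while invariance along the compact direction follows because the uniform measure on $\Z/p_n\Z$ is exactly translation invariant and $p_n\to\infty$; hence $\nm{\lambda^n_S-\lambda^n_T}_1\to 0$ for every $(S,T)\in E$, which is amenability, and Connes--Feldman--Weiss then yields hyperfiniteness $\mu$-a.e.

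The hard part will be Steps two and three: establishing the structural decomposition of conjugacies into a compact base-translation datum and a finitary array datum, and proving in a \emph{Borel-uniform} way that the array relation $F$ is hyperfinite. This is where the separated-holes hypothesis must be used in earnest, both to isolate holes so that block codes act locally and to make the matching data, the subgroup $K_S$, and the discretized Haar weights all depend Borel-measurably on $S$ through $\tau$. Once this bookkeeping is in place, the averaging in the final step is routine.
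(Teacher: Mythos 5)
Your proposal has a genuine gap, and it is located exactly where you declare the work to be ``routine'': the averaging over the compact direction. The structural decomposition you describe --- a hyperfinite subrelation $F$ (conjugacies with trivial base translation, i.e.\ $\ker\alpha_0$ in the notation of Section~\ref{sec:groupoid-viewpoint}) together with a cocycle into the odometer group $Y=\widehat\Z/K$ --- holds for \emph{all} Toeplitz $\Z$-subshifts, with no separated-holes hypothesis; this is precisely the content of Section~\ref{sec:groupoid-viewpoint}, where it is observed that nothing can be concluded from this structure, not even non-universality of $E$. Indeed, ``hyperfinite-by-compact-cocycle'' does not imply amenability in general: if $F_2$ acts freely generating a non-amenable relation, the inclusion $F_2\hookrightarrow\widehat{F_2}$ gives a cocycle into a compact group with trivial (hence hyperfinite) kernel. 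The compactness of $Y$ is of no use because the base translations actually realized by conjugacies out of a fixed $S$ form only a countable subset of $Y$, and Haar measure (or its discretization on $\Z/p_n\Z$) does not induce a probability measure on that countable set: the reduction mod $p_n$ is far from injective on it, and there is no canonical, Borel, asymptotically invariant way to lift the uniform weights on $\Z/p_n\Z$ to $\ell^1$ weights on the $F$-classes inside $[S]_E$. Your convolution $\lambda^n_S$ is therefore not well defined, and the claimed convergence $\nm{\lambda^n_S-\lambda^n_T}_1\to 0$ is asserted without any estimate. Relatedly, you place the separated-holes hypothesis in the wrong step: it is not needed for the decomposition or for the hyperfiniteness of $F$ (Proposition~5.1 needs neither), so a proof along your lines would resolve the open question for arbitrary Toeplitz $\Z$-subshifts --- a strong sign something is missing.

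The paper's actual proof avoids the decomposition entirely and builds the Reiter functions directly by counting. For each period $p$ and each offset $i<p$ it forms the finite set $A_{p,i}(x)$ of subshifts obtained from $x=\tau(S)$ by applying an arbitrary permutation of $2^p$ blockwise at offset $i$, and sets $B^p(S,T)=\sum_{i<p}\chi_{A_{p,i}(x)}(T)$, normalized to $\lambda^p$. Separated holes enters in the quantitative estimate: if $f\colon S\to T$ is a conjugacy of radius $r$, then $A_{p,i}(x)=A_{p,i}(f(x))$ for every $i$ at distance more than $r$ from a $p$-hole (Claim~\ref{cl:1}), and $A_{p,i}(x)=A_{p,i+1}(x)$ whenever $i$ is not a hole (Claim~\ref{cl:2}); since consecutive holes are at distance at least $(2r+1)/\eps$ for $p$ large, the offsets where $B^p_S$ and $B^p_T$ can disagree contribute at most an $\eps$-fraction of $\nm{B^p_S}_1$. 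That counting mechanism --- finitely many block permutations per offset, locality of block codes away from holes, and the density estimate on hole positions --- is the substitute for the invariant averaging you hoped to get from compactness, and it is where the separated-holes hypothesis is genuinely consumed.
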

\begin{proof} 
Let $p \in \N$. Denote by $\Sym(2^p)$ the set of all bijections
$2^p \to 2^p$. For $\pi \in \Sym(2^p)$, let
$\hat \pi \colon 2^\Z \to 2^\Z$ be defined by
$\hat \pi(x)|_{[kp, (k+1)p)]} = \pi(x|_{[kp, (k+1)p)})$ for all
$k \in \Z$. Then if $S \sub 2^\Z$ is a closed $p\Z$-invariant set,
$\hat \pi(S)$ is also a closed $p\Z$-invariant set and $\hat \pi$
defines an isomorphism between them (as $p\Z$-systems).

If $x \in 2^\Z$ and $i \in \Z$, denote by $x+i$ the shift of $x$ by
$i$. If $S \sub 2^\Z$ is a Toeplitz subshift, $x \in S$ is a Toeplitz word
and $p$ is a period, define $A_{p, i}(x) \sub S(\Z)$ and $B^p \colon E
\to \R$ by
\begin{align*}
A_{p, i}(x) &= \set{\cl{\Z \cdot \hat \pi(x+i)} : \pi \in \Sym(2^p)}, \\
B^p(S, T)  &= \sum_{i < p} \chi_{A_{p, i}(x)}(T),
\end{align*}
where $\chi_A$ denotes the characteristic function of $A$. Note that
the value of $B^p(S, T)$ does not depend on the choice of a Toeplitz
$x \in S$. Indeed, let $x_1, x_2 \in S$ be Toeplitz. We have that
$S = \bigcup_{j<p} \cl{p\Z \cdot (x_1+j)}$ and there exists $j_0$ such
that $x_2 \in \cl{p\Z \cdot (x_1+j_0)}$ and therefore (as
$\cl{p\Z \cdot (x_1+j_0)}$ is Toeplitz and thus, minimal),
$\cl{p\Z \cdot x_2} = \cl{p\Z \cdot (x_1 + j_0)}$. Then
$\cl{p\Z \cdot \hat \pi (x_2 + i)} = \cl{p\Z \cdot \hat \pi(x_1 + j_0
  + i)}$, whence, by minimality,
$\cl{\Z \cdot \hat \pi(x_2 + i)} = \cl{\Z \cdot \hat \pi(x_1 + j_0 +
  i)}$ for every $i$ and every $\pi$. Note finally that, as we can choose
$x = \tau(S)$ as per Proposition~\ref{p:Toeplitz-selector}, the
function $B^p$ is Borel.

Denote by $E$ the equivalence relation of isomorphism on Toeplitz
subshifts and define $\lambda^p \colon E \to \R$ by
\[
\lambda^p(S, T) = \frac{B^p(S, T)}{\sum_{T' \eqrel{E} S} B^p(S, T')} =
\frac{B^p(S, T)}{\nm{B^p_S}_1}.
\]
The functions $\lambda^p$ clearly satisfy the first condition in the
definition of amenability; in what follows, we check that if we
restrict to the subshifts with separated holes, they also satisfy the
second.

Recall that if $S$ and $T$ are subshifts and $f \colon S \to T$ is an
isomorphism, then it is given by a \emph{block code}, i.e., there
exists $r \in \N$ and a function $\phi \colon 2^{2r+1} \to 2$
such that for all $x \in S$,
\begin{equation*}
  f(x)(i) = \phi(x|_{[i - r, i + r]}).
\end{equation*}
The number $r$ is called the \emph{radius} of the block code.

Let $S$ and $T$ be two Toeplitz subshifts with separated holes and let
$f \colon S \to T$ be an isomorphism such that $f$ and $f^{-1}$ are
given by block codes of radius $r$. Let $\eps > 0$ be given. If
$C \sub \Z$, let
\[
(C)_r = \set{c + j \in \Z / p\Z : c \in C, |j| \leq r}.
\]
(Here and below, we suppress the natural quotient map $\Z \to \Z /
p\Z$ from the notation.) Note that $|(C)_r| \leq (2r+1)|C|$.

Let $x \in S$ be a Toeplitz word and let $y = f(x)$. Let $p$ be a
period so big that the distance between two consecutive holes in
$H_p(x)$ and in $H_p(y)$ is larger than $M = (2r+1)/\eps$.

\begin{claim} \label{cl:1}
For all $i \notin (H_p(x))_r$, we have $A_{p, i}(x) = A_{p, i}(y)$.
\end{claim}
\begin{proof}
  This follows from the fact that for $i \notin (H_p(x))_r$, there
  exists $\pi \in \Sym(2^p)$ such that $\hat \pi(x+i) = y+i$. Indeed,
  as $y = f(x)$ and $f$ is given by a block code of radius $r$,
  $y|_{[i+kp, i+(k+1)p)}$ only depends on $x|_{[i+kp-r, i+(k+1)p+r)}$
  and
  \begin{align*}
    x|_{[i+kp-r, i+kp)]} &= x|_{[i+(k+1)p-r, i+(k+1)p)} \\
    x|_{[i+(k+1)p, i+(k+1)p+r)} &= x|_{[i+kp, i+kp+r)}
  \end{align*}
  because $(i-r, i+r) \sub \Per_p(x)$. Thus $y|_{[i+kp, i+(k+1)p)}$ can
  be calculated from $x|_{[i+kp, i+(k+1)p)}$ in a way independent of
  $k$ which gives the claim.
\end{proof}

\begin{claim} \label{cl:2}
For all $i \notin H_p(x)$, we have $A_{p, i}(x) = A_{p, i+1}(x)$.
\end{claim}
\begin{proof}
Indeed, let $\sigma \colon 2^{\Z / p\Z} \to 2^{\Z / p\Z}$ be defined by
$\sigma(z)(j) = z(j-1)$, and observe that as $i \in \Per_p(x)$,
$\hat \sigma (x+i) = x+i+1$. Then
\begin{equation*}
  \begin{split}
    A_{p, i}(x) &= \set{\cl{\Z \cdot \hat \pi(x+i)} : \pi \in
      \Sym(2^p) } \\
    &= \set{\cl{\Z \cdot (\widehat{\pi \sigma})(x+i)} : \pi \in
      \Sym(2^p)} \\
    &= A_{p, i+1}(x).
    \qedhere
  \end{split} 
\end{equation*}
\end{proof}

Let $h_0, h_1, h_2, \ldots h_J$ enumerate the holes in $H_p(x)$ in
circular order (so that $J+1 = 0$). By the choice of $p$,
$|h_{j} - h_{j+1}| \geq M$ for all $j$. If $f$ is a real-valued function,
denote by $f^+$ the function $\max(f, 0)$ and note that $\nm{f}_1 =
\nm{f^+}_1 + \nm{(-f)^+}_1$. We calculate:
\[
\begin{split}
  \nm{(B^p_S - B^p_T)^+}_1 &= \nm[\big]{\big(\sum_{i < p} \chi_{A_{p, i}(x)} -
  \sum_{i < p} \chi_{A_{p, i}(y)} \big)^+ }_1 \\
  &\leq \nm[\big]{\sum_{j < J} \sum_{i = h_j-r}^{h_j+r} \chi_{A_{p, i}(x)}}_1  \\
  &\leq \nm[\big]{\sum_{j < J} \big( \sum_{i = h_j+1}^{h_j + r}
  \chi_{A_{p, i}(x)} + \sum_{i = h_{j+1} - r}^{h_{j+1}} \chi_{A_{p,i}(x)} \big)}_1 \\
  &\leq \sum_{j < J} \big( (2r+1)/M \big) \sum_{i = h_j+1}^{h_{j+1}} |A_{p,i}(x)| \\
  &= \big((2r+1)/M \big) \nm{B^p_S}_1 \\
  &\leq \eps \nm{B^p_S}_1.
\end{split}
\]
For the inequality on the second line, we use Claim~\ref{cl:1}, and for the one
on the fourth line, we use Claim~\ref{cl:2}. Similarly,
$\nm{(B^p_T - B^p_S)^+}_1 \leq \eps \nm{B^p_T}_1$, whence
\begin{equation*}
\nm{B^p_S - B^p_T}_1 \leq \eps \big( \nm{B^p_S}_1 + \nm{B^p_T}_1 \big).  
\end{equation*}
Using these estimates, it is now easy to see that if $\set{p_n}$ is
a sequence of periods such that $p_n | p_{n+1}$ for all $n$, then
$\nm{\lambda^{p_n}_x - \lambda^{p_n}_y}_1 \to 0$ as $n \to \infty$.
\end{proof}


\section{The non-amenable case}
\label{sec:non-amenable-case}

Now we return to the general situation of
Section~\ref{sec:Toeplitz-subshifts}, where $G$ is a residually finite
group. We also fix a decreasing sequence $\set{H_n}$ of finite index,
\emph{normal} subgroups with trivial intersection, to be determined
later. We also denote by $\hat G$ the inverse limit
$\varprojlim G/H_n$ (which is a group quotient of the profinite
completion $\widehat G$) and by $\pi_n \colon \hat G \to G/H_n$ the
natural projections.

The goal of this section is to prove Theorem~\ref{th:non-amenable}.
The proof proceeds by constructing a probability measure on the set of
Toeplitz subshifts which is invariant under an appropriate action of
$G$ contained in the isomorphism relation. We then check that the
point stabilizers for this action are amenable and conclude that if
$G$ is non-amenable, then the isomorphism equivalence relation is not
amenable either.

\subsection{The left and right actions}\label{sec:actions}
First, we describe the construction of the measure. Let
$A_n = (H_{n-1}/H_n) \setminus \{H_n\}$ and let
$Z = 2^{\bigsqcup_n A_n}$. Let
$Y = \{y \in \hat G : y \notin G\}$. Consider the maps
\[
Y \times Z \xrightarrow{\ \sigma\ } 2^G \xrightarrow{\ \rho\ } S(G)
\]
defined as follows. If $(y, z) \in Y \times Z$, define
$\sigma(y, z)$ by
\[
\sigma(y, z)(h) = z(\pi_{n_0}(y^{-1}h)), \quad \text{where } n_0 = \min
\{n : \pi_n(y) \neq \pi_n(h)\}.
\]
Define $\rho \colon 2^G \to S(G)$ by $\rho(x) = \overline{G \cdot
x}$. Let $\theta = \rho \circ \sigma$.

$G$ acts on $2^G$ on both sides: on the left, by
\[
(g \cdot x)(h) = x(g^{-1}h)
\]
and on the right, by
\[
(x \cdot g)(h) = x(hg^{-1})
\]
and the two actions commute. Note that if $S \sub 2^G$ is a subshift
and $g \in G$, then $S \cdot g$ is a subshift too, so we have a right
action $S(G) \curvearrowleft G$. Moreover, for every fixed $g \in G$,
the map
\[
S \to S \cdot g, \quad x \mapsto x \cdot g
\]
is an isomorphism of subshifts.

The space $Y \times Z$ is also equipped with commuting left and right
actions of $G$, defined as follows. For $y \in Y$ and $g \in G$,
define $g \cdot y = gy$ and $y \cdot g = yg$ (recall that
$G \subseteq \hat G$). For $z \in Z$, define $g \cdot z = z$ and
$(z \cdot g)(a) = z(gag^{-1})$ for all $a \in \bigsqcup_n A_n$. Equip
$Y \times Z$ with the diagonal left and right actions.

\begin{lemma}\label{l:invariance}
  The maps $\sigma$ and $\rho$ commute with both the left and right
  actions of $G$ on the respective spaces.
\end{lemma}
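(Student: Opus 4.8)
The plan is to verify the four required equivariance statements directly from the definitions, treating the left and right actions for each of $\sigma$ and $\rho$ as four separate computations. The statement for $\rho$ is essentially immediate: since $\rho(x) = \cl{G \cdot x}$ and the left $G$-action on $2^G$ descends to the left $G$-action on $S(G)$ tautologically, left-equivariance of $\rho$ is automatic; for the right action I would use that the left and right actions on $2^G$ commute, so that $\rho(x \cdot g) = \cl{G \cdot (x \cdot g)} = \cl{(G \cdot x) \cdot g} = \cl{G \cdot x} \cdot g = \rho(x) \cdot g$, where the only point needing a word is that closure commutes with the homeomorphism $x \mapsto x \cdot g$.

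The substance lies in $\sigma$. First I would establish right-equivariance, i.e., $\sigma((y, z) \cdot g) = \sigma(y, z) \cdot g$, by unwinding both sides at an arbitrary $h \in G$. On the left, $(y,z)\cdot g = (yg, z \cdot g)$, so I must compute $\sigma(yg, z\cdot g)(h) = (z \cdot g)(\pi_{m_0}((yg)^{-1}h))$ where $m_0 = \min\set{n : \pi_n(yg) \neq \pi_n(h)}$; on the right, $(\sigma(y,z)\cdot g)(h) = \sigma(y,z)(hg^{-1}) = z(\pi_{n_0}(y^{-1}hg^{-1}))$ where $n_0 = \min\set{n : \pi_n(y) \neq \pi_n(hg^{-1})}$. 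The key observation is that since the $H_n$ are \emph{normal}, right multiplication by $g$ on $\hat G$ descends to a bijection of each $G/H_n$ compatible with $\pi_n$, so $\pi_n(yg) = \pi_n(h) \iff \pi_n(y) = \pi_n(hg^{-1})$, giving $m_0 = n_0$; and then the definition $(z \cdot g)(a) = z(gag^{-1})$ is exactly what is needed to match the conjugated coordinate, since $\pi_{m_0}((yg)^{-1}h) = g^{-1}\pi_{m_0}(y^{-1}h)g$ in $G/H_{m_0}$. These two facts together should collapse the two expressions.

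For left-equivariance of $\sigma$, I would verify $\sigma(k \cdot (y,z)) = k \cdot \sigma(y,z)$ for $k \in G$. Here $(y,z) \cdot$ on the left becomes $(ky, z)$ since $k \cdot z = z$, and the right-hand side is $(k \cdot \sigma(y,z))(h) = \sigma(y,z)(k^{-1}h)$. Evaluating the left-hand side gives $\sigma(ky,z)(h) = z(\pi_{n_0}((ky)^{-1}h))$ with $n_0 = \min\set{n : \pi_n(ky) \neq \pi_n(h)}$, while the right-hand side is $z(\pi_{m_0}(y^{-1}k^{-1}h))$ with $m_0 = \min\set{n : \pi_n(y) \neq \pi_n(k^{-1}h)}$; since $\pi_n$ is a homomorphism, $\pi_n(ky) = \pi_n(h) \iff \pi_n(y) = \pi_n(k^{-1}h)$, so $n_0 = m_0$, and $(ky)^{-1}h = y^{-1}k^{-1}h$ makes the arguments of $z$ coincide outright. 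This case is cleaner than the right action precisely because no conjugation of $z$ intervenes.

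The main obstacle I anticipate is bookkeeping rather than conceptual difficulty: I must be careful that the ``min'' defining $n_0$ is well-defined and finite on all of $Y \times Z$ (this uses $y \notin G$ together with $\bigcap_n H_n = \set{1}$, ensuring $\pi_n(y) \neq \pi_n(h)$ for large $n$) and that it genuinely transforms as claimed under each action; and in the right-action computation I must track the conjugation $a \mapsto gag^{-1}$ through the projection $\pi_{n_0}$, which is where normality of the $H_n$ is indispensable. Once the indices $n_0$ are shown to agree and the $z$-arguments are matched, each of the four verifications reduces to the displayed identities and the lemma follows.
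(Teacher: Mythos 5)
Your proposal follows essentially the same route as the paper: the paper likewise reduces everything to the right-equivariance of $\sigma$ (explicitly omitting the other three verifications as routine), matches the two minima defining $n_0$ using that right translation by $g$ descends to each $G/H_n$, and then uses $(z \cdot g)(a) = z(gag^{-1})$ to align the arguments of $z$. One slip to correct: the displayed identity should read $\pi_{m_0}((yg)^{-1}h) = g^{-1}\pi_{m_0}(y^{-1}hg^{-1})g$ rather than $g^{-1}\pi_{m_0}(y^{-1}h)g$ (you dropped a $g^{-1}$ on $h$); as written it would produce $z(\pi_{m_0}(y^{-1}h))$ instead of the target $z(\pi_{n_0}(y^{-1}hg^{-1}))$, but with the corrected identity the computation closes exactly as in the paper.
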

\begin{proof}
  We only check that $\sigma$ commutes with the right actions. Suppose
  that $(y_1,z_1),(y_2,z_2)$ are elements of $Y\times Z$ such that
  $(y_1,z_1)\cdot g=(y_2,z_2)$. Let $x_1=\sigma(y_1,z_1)$ and
  $x_2=\sigma(y_2,z_2)$. Note that for every $h\in G$ we have
  \begin{equation*}
  \min\{n:\pi_n(y_1)\not=\pi_n(hg^{-1})\}=\min\{n:\pi_n(y_1g)\not=\pi_n(h)\}.
  \end{equation*}
  Write $n_0(h)$ for this common value. Thus,
  \begin{align*}
    (x_1\cdot g)(h) &=
                      x_1(hg^{-1})=z_1(\pi_{n_0(h)}(y_1^{-1}hg^{-1})),
                      \quad \text{ and } \\
    x_2(h) &= z_2(\pi_{n_0(h)}(y_2^{-1}h))=z_2(\pi_{n_0(h)}(g^{-1}y_1^{-1}h)).
  \end{align*}
  But
  \begin{equation*}
    \begin{split}
      z_2(\pi_{n_0(h)}(g^{-1}y_1^{-1}h)) &=
        z_1(g\pi_{n_0(h)}(g^{-1}y_1^{-1}h)g^{-1})) \\
      &= z_1(\pi_{n_0(h)}(y_1^{-1}hg^{-1})),    
    \end{split}
  \end{equation*}
  so $x_1\cdot g=x_2$, as needed.
\end{proof}

We call an element $z \in Z$ \textit{proper} if it takes both values
$0$ and $1$ infinitely many times. Note that $z$ is proper if and only
if for every (any) $y \in Y$, $G \cdot \sigma(y, z)$ is infinite.
\begin{lemma}\label{l:mef}
   For any $y\in Y$ and proper $z\in Z$, $\sigma(y, z)$ is a Toeplitz
   word and the m.e.f. of $\cl{G \cdot \sigma(y, z)}$ is isomorphic to
   $\hat G$. In particular, the subshift $\cl{G \cdot \sigma(y, z)}$ is free.
\end{lemma}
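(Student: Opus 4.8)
The plan is to produce an explicit factor map $\pi\colon S\to\hat G$ onto the profinite rotation $G\actson\hat G$, to show that it is almost one-to-one, and then to invoke the standard fact that an almost one-to-one factor of a minimal system onto an equicontinuous system realizes the maximal equicontinuous factor. Throughout, write $x=\sigma(y,z)$ and $S=\cl{G\cdot\sigma(y,z)}$.

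First I would analyze the period structure of $x$. A direct computation from the definition of $\sigma$ shows that, for each $n$,
\[
\Per_{H_n}(x)=\set{h\in G:\pi_n(h)\neq\pi_n(y)},
\]
since $x$ is constant on a coset $hH_n$ exactly when $h$ and $y$ already disagree at level $n$. As $y\notin G$, every $h\in G$ satisfies $\pi_n(h)\neq\pi_n(y)$ for all large $n$, so $\bigcup_n\Per_{H_n}(x)=G$ and $x$ is Toeplitz. Using Lemma~\ref{l:invariance} (so that $g\cdot x=\sigma(gy,z)$), the same formula gives $\Per_{H_n}(g\cdot x)=\set{h:\pi_n(h)\neq\pi_n(gy)}$; since the complement of each of these sets is a single $H_n$-coset, one has $\Per_{H_n}(x)\subseteq\Per_{H_n}(g\cdot x)$ iff $\pi_n(gy)=\pi_n(y)$, i.e.\ iff $g\in H_n$. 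Hence every $H_n$ is an \emph{essential} group of periods for $x$.

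Next I would build the factor map. For $w\in S$ and each $n$, call a coset $cH_n$ \emph{active} if $w$ is non-constant on it. Properness of $z$ guarantees that $x$ is non-constant precisely on the single coset $\set{h:\pi_n(h)=\pi_n(y)}$, so every orbit point has exactly one active coset at level $n$; since having at least two active cosets is an open condition, and having none forces $w$ to be $H_n$-periodic (hence to have finite orbit), minimality and infiniteness of $S$ imply that \emph{every} $w\in S$ has exactly one active coset $\beta_n(w)\in G/H_n$. The maps $\beta_n$ are then continuous (each fibre is open), $G$-equivariant, and compatible with the projections $G/H_n\to G/H_{n-1}$, so they assemble into a continuous $G$-equivariant surjection $\pi=(\beta_n)\colon S\to\hat G$ (surjectivity because the image is closed, $G$-invariant and nonempty in the minimal system $G\actson\hat G$). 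Since $G\actson\hat G$ by left translation is free, equivariance of $\pi$ immediately yields that $S$ is free, which is the final assertion of the lemma.

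Finally I would identify $\hat G$ as the m.e.f.\ by showing $\pi$ is almost one-to-one. Computing the coordinatewise limits of $\sigma(g_ky,z)$ as $g_ky\to\xi$ in $\hat G$ shows that for $\xi\in Y$ the fibre $\pi^{-1}(\xi)$ is the single point $\sigma(\xi,z)$, whereas over $G$ each fibre has at most two points (the one hole filled by $0$ or $1$). Thus $\pi$ is injective over the co-countable set $Y=\hat G\setminus G$; as $S$ is perfect, $\pi^{-1}(Y)$ is residual and $\pi$ is almost one-to-one. Combined with equicontinuity of $G\actson\hat G$, this gives that the m.e.f.\ of $S$ is $\hat G$. (Alternatively, one can finish by feeding the essential-periods computation of the second paragraph into the structure theorem \cite[Proposition~7]{Cortez2008}.) The main obstacle is exactly this almost one-to-one property: collapsing the fibre over each $\xi\in Y$ to a point is where properness of $z$ is indispensable, being needed both to ensure that each active coset is genuinely non-constant (so that $\beta_n$ is well defined and $S$ is aperiodic) and to force the limit $\sigma(g_ky,z)\to\sigma(\xi,z)$ to be uniquely determined off the single hole.
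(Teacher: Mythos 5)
Your proof is correct, and the periodicity analysis at its core coincides with the paper's: you compute $\Per_{H_n}(\sigma(y,z)) = \set{h : \pi_n(h) \neq \pi_n(y)}$ to get the Toeplitz property, and you verify (using properness of $z$ in exactly the same way) that each $H_n$ is an essential group of periods. Where you diverge is in how you identify the m.e.f.: the paper simply feeds the essential-periods computation into the structure theorem of Cortez \cite[Proposition~7]{Cortez2008} and stops there, whereas your main route constructs the factor map $\pi = (\beta_n) \colon S \to \hat G$ explicitly via the ``unique active coset'' at each level and then shows $\pi$ is almost one-to-one (singleton fibres over $Y = \hat G \setminus G$), invoking the standard fact that an almost one-to-one minimal extension of an equicontinuous system realizes the m.e.f. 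This buys a self-contained argument that does not rely on the external citation, and it exhibits concretely the almost one-to-one structure of the subshift (which is of independent interest, e.g.\ for the groupoid discussion later in the paper); the cost is the extra verification that every $w \in S$ has exactly one active coset per level and the fibre computation over limits $g_k y \to \xi$, all of which you handle correctly. Both arguments deduce freeness identically, from freeness of the translation action $G \actson \hat G$ (normality and trivial intersection of the $H_n$). One cosmetic remark: for the almost one-to-one step you only need a single singleton fibre, so the ``at most two points over $G$'' observation, while true, is not needed.
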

\begin{proof}
  Write $x=\sigma(y,z)$. First note that for any $g \in G$, the value
  $x(g)$ is assumed on the whole $H_{n_0}$-coset $\pi_{n_0}(y^{-1}g)$,
  where $n_0 = \min \{n : \pi_n(y) \neq \pi_n(g)\}$, showing that $x$
  is Toeplitz.

  To calculate the m.e.f., by \cite{Cortez2008}, it suffices to observe
  that for all $n$, $H_n$ is an essential group of periods for $x$.
  Indeed, using the fact that $z$ is proper, we have that
  $\Per_{H_n}(x) = G \setminus \pi_n(y)$ which is not contained in
  $\Per_{H_n}(g \cdot x) = G \setminus \pi_n(g \cdot y)$ for any $g
  \notin H_n$.

  For the final claim, just note that as $\bigcap_n H_n = \set{1_G}$
  and the $H_n$ are normal, the translation action $G \actson \hat G$
  is free.
\end{proof}

The main ingredient for the proof of the theorem is the following lemma.
\begin{lemma}
  \label{l:injective}
  For all proper $z_1, z_2 \in Z$ and all $y_1, y_2 \in Y$,
  \[
  z_1 \neq z_2 \implies \theta(y_1, z_1) \neq \theta(y_2, z_2).
  \]
\end{lemma}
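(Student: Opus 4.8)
The plan is to prove the contrapositive in the strong form: if the subshifts $\overline{G\cdot\sigma(y_1,z_1)}$ and $\overline{G\cdot\sigma(y_2,z_2)}$ coincide, then already $z_1=z_2$. The guiding idea is that a proper $z$ can be recovered from any single Toeplitz word $\sigma(y,z)$ once its ``position'' $y$ is known. Concretely, for $y\in Y$ I would consider the map $\kappa_y\colon G\to\bigsqcup_n A_n$ sending $h$ to $\pi_{n_0}(y^{-1}h)$, where $n_0=\min\{n:\pi_n(y)\neq\pi_n(h)\}$. This is well defined because $y\notin G$ forces $n_0<\infty$ and $\pi_{n_0}(y^{-1}h)\in A_{n_0}$, and by definition $\sigma(y,z)=z\circ\kappa_y$. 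The first step is to record that $\kappa_y$ is surjective: given $a\in A_n$, surjectivity of $G\to G/H_n$ provides $h\in G$ with $\pi_n(h)=\pi_n(y)a$, and one checks $n_0(h)=n$ and $\kappa_y(h)=a$. Hence $z\mapsto\sigma(y,z)$ is injective for every $y\in Y$, so it will suffice to exhibit a single $y\in Y$ with $\sigma(y,z_1)=\sigma(y,z_2)$.

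To produce such a $y$, I would write $x_i=\sigma(y_i,z_i)$, set $S=\overline{G\cdot x_1}=\overline{G\cdot x_2}$, and use $x_2\in S$ to choose $g_k\in G$ with $g_k\cdot x_1\to x_2$. By Lemma~\ref{l:invariance}, $g_k\cdot x_1=\sigma(g_ky_1,z_1)$, and by compactness of $\hat G$ I pass to a subsequence along which $g_ky_1\to y_*\in\hat G$. The technical heart is a pointwise stabilization argument: for each fixed $h\in G$ with $h\neq y_*$, put $n_0(h)=\min\{m:\pi_m(y_*)\neq\pi_m(h)\}$; once $k$ is large enough that $\pi_m(g_ky_1)=\pi_m(y_*)$ for all $m\le n_0(h)$, one checks that the minimizing index computed against $g_ky_1$ equals $n_0(h)$ and that $\pi_{n_0(h)}((g_ky_1)^{-1}h)=\pi_{n_0(h)}(y_*^{-1}h)$. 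Hence $(g_k\cdot x_1)(h)$ is eventually equal to $z_1(\kappa_{y_*}(h))$, so that $x_2(h)=z_1(\kappa_{y_*}(h))$ at every $h\neq y_*$.

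It remains to identify $y_*$, which I would extract from the period structure. By Lemma~\ref{l:mef} applied to the proper word $x_2$, one has $\Per_{H_n}(x_2)=G\setminus\pi_n(y_2)$, so the set of $H_n$-holes of $x_2$ is exactly the coset $\pi_n(y_2)$. On the other hand, the formula $x_2(h)=z_1(\kappa_{y_*}(h))$ (valid for all $h\neq y_*$) lets me compute, exactly as in the proof of Lemma~\ref{l:mef}, that every $h$ with $\pi_n(h)\neq\pi_n(y_*)$ is an $H_n$-period of $x_2$: on such a coset $H_nh$ the value depends only on $\pi_{n_0(h)}(y_*^{-1}h)$ with $n_0(h)\le n$, and the at most one exceptional coordinate $h=y_*$ lies in $\pi_n(y_*)$ and hence not in $H_nh$. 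Therefore the $H_n$-holes of $x_2$ are contained in the coset $\pi_n(y_*)$. Since these holes form the nonempty coset $\pi_n(y_2)$, the cosets $\pi_n(y_2)$ and $\pi_n(y_*)$ intersect and are thus equal, for every $n$; consequently $y_*=y_2\in Y$. In particular $x_2=\sigma(y_2,z_1)$, and comparing with $x_2=\sigma(y_2,z_2)$ together with the injectivity of $z\mapsto\sigma(y_2,z)$ yields $z_1=z_2$, as desired.

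I expect the main obstacle to be the middle step: justifying that the limit word is genuinely governed by $\kappa_{y_*}$ and controlling the single coordinate $h=y_*$ where the formula may fail, together with ruling out the degenerate possibility $y_*\in G$. The key observation that resolves both at once is that the comparison of holes is a comparison of $H_n$-cosets, so a containment forces equality; this localizes the effect of the exceptional coordinate and makes the period computation of Lemma~\ref{l:mef} do the work.
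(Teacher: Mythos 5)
Your proof is correct, but it takes a genuinely different route from the paper's. The paper argues directly and finitarily: it picks $a_0 \in A_n$ where $z_1$ and $z_2$ differ, normalizes so that $\pi_n(y_1)=\pi_n(y_2)=1$, and exhibits a three-element set $F=\{f,f_0,f_1\}$ (with $\pi_n(f)=a_0$ and $f_0,f_1$ witnessing properness of $z_2$) such that the pattern $x_2|_F$ occurs nowhere in $x_1$ — a concrete forbidden word certifying $\cl{G\cdot x_1}\neq\cl{G\cdot x_2}$, handled by a two-case analysis on whether the translate $g$ lies in $H_n$. You instead prove the contrapositive by a soft limiting argument: injectivity of $z\mapsto\sigma(y,z)$ via surjectivity of $\kappa_y$, then a compactness/pointwise-stabilization step showing that any word in $\cl{G\cdot\sigma(y_1,z_1)}$ obtained as a limit of translates is again of the form $\sigma(y_*,z_1)$ off at most one coordinate, and finally an identification $y_*=y_2$ by matching the $H_n$-hole cosets computed in Lemma~\ref{l:mef}. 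Your checks are the right ones (well-definedness of $n_0(h)$ for $h\neq y_*$, the exceptional coordinate lying in the coset $\pi_n(y_*)$ and hence not disturbing the period computation on other cosets, nonempty cosets of $H_n$ being equal once one contains the other). What the paper's argument buys is self-containedness and an explicit finite witness; what yours buys is a stronger structural conclusion — you actually identify $x_2$ as $\sigma(y_2,z_1)$, i.e., you show that the ``Toeplitz coordinates'' $(y,z)$ with $z$ proper are recoverable from the word, and hence that $\theta$ determines $z$ — at the cost of invoking Lemma~\ref{l:invariance}, Lemma~\ref{l:mef}, and sequential compactness of $\hat G$.
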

\begin{proof}
  Let $x_i = \sigma(y_i, z_i)$. To show that $\cl{G \cdot
    x_1} \neq \cl{G \cdot x_2}$, it suffices to find a
  ``subword'' of $x_2$ that does not occur in $x_1$, i.e., a
  finite set $F \subseteq G$ such that
  \[
  \forall g \in G \ \exists f \in F \quad x_1(gf) \neq x_2(f).
  \]
  
  Let $a_0 \in A_n$ be such that $z_1(a_0) \neq
  z_2(a_0)$. By replacing $(y_1, z_1)$ and $(y_2, z_2)$ with
  $g_1 \cdot (y_1, z_1)$ and $g_2 \cdot (y_2, z_2)$ (which
  does not change either $z_i$ or $\theta(y_i, z_i)$) for
  suitably chosen $g_1, g_2 \in G$, we may assume that
  $\pi_n(y_1) = \pi_n(y_2) = 1_{G/H_n}$. Let $f_0, f_1 \in
  G$ be such that $\pi_n(f_0) = \pi_n(f_1)$, $x_2(f_0) = 0$,
  $x_2(f_1) = 1$ (such $f_0, f_1$ exist by the assumption
  that $z_2$ is proper). Let $f \in G$ be such that
  $\pi_n(f) = a_0$ and finally, let $F = \{f, f_0, f_1\}$.

  Next we show that this $F$ works. Let $g \in G$ be
  arbitrary. We distinguish two cases: $g \in H_n$ and $g
  \notin H_n$. Let first $g \in H_n$. We check that $x_1(gf)
  \neq x_2(f)$. Indeed,
  \[
  x_1(gf) = z_1(\pi_n(y_1^{-1}gf)) = z_1(\pi_n(f)) = z_1(a_0)
  \]
  and
  \[
  x_2(f) = z_2(\pi_n(y_2^{-1}f)) = z_2(\pi_n(f)) = z_2(a_0),
  \]
  which are different by the choice of $a_0$.

  Suppose now that $g \notin H_n$. We will show that
  $x_1(gf_0) = x_1(gf_1)$ which will complete the proof (as
  $x_2(f_0) \neq x_2(f_1)$). Indeed, observe that the least
  $k$ for which $\pi_k(gf_i) \neq 1_{G/H_k}$ is at most $n$
  and is the same for $i = 0, 1$ (as $\pi_n(f_0) =
  \pi_n(f_1)$). Recall also that $\pi_k(y_1) = \pi_k(y_2) =
  1_{G/H_k}$. Now we have
  \[
    \begin{split}
      x_1(gf_0) &= z_1(\pi_k(y_1^{-1}gf_0)) = z_1(\pi_k(gf_0)) \\
      &= z_1(\pi_k(gf_1)) = x_1(gf_1),      
    \end{split}
  \]
  completing the proof.
\end{proof}

\subsection{A.e. amenable stabilizers}\label{sec:stabilizers}
Let $\lambda$ be the Haar measure on $\hat G$ and note that as
$\lambda(Y) = 1$, we can consider $\lambda$ as a measure on $Y$. Let
$\nu$ be the Bernoulli $(\frac{1}{2},\frac{1}{2})$ measure on
$Z = 2^{\bigsqcup_n A_n}$. Equip $Y \times Z$ with the product measure
$\lambda \times \nu$. It is clear that this measure is invariant under
both the left and the right action of $G$ (but for us it will be the
right one that will be important). Let $\mu = \theta_*(\lambda \times \nu)$.

\begin{lemma}\label{l:mu-concentrates}
  The measure $\mu$ concentrates on the set of free, Toeplitz
  $G$-subshifts and is invariant under the right action.
\end{lemma}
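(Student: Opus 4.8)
The plan is to prove the two assertions separately, in each case reducing to a lemma already at hand. For the concentration statement, I would invoke Lemma~\ref{l:mef}: for every $y \in Y$ and every \emph{proper} $z \in Z$, the word $\sigma(y,z)$ is Toeplitz and $\theta(y,z) = \cl{G \cdot \sigma(y,z)}$ is a free Toeplitz subshift. Hence $\theta$ already maps the set $\set{(y,z) : z \text{ proper}}$ into the free Toeplitz subshifts, and it suffices to check that this set has full $(\lambda \times \nu)$-measure, i.e. that $\nu$-almost every $z$ is proper. Here I would first observe that $\bigsqcup_n A_n$ is infinite: since $\bigcap_n H_n = \set{1_G}$ and $G$ is infinite (being non-amenable), the indices $[G:H_n]$ must tend to infinity (otherwise the decreasing sequence would stabilize at a finite-index, hence infinite, subgroup with nontrivial intersection), so infinitely many of the finite sets $A_n = (H_{n-1}/H_n)\setminus\set{H_n}$ are nonempty. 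As $\nu$ is the fair Bernoulli measure on $2^{\bigsqcup_n A_n}$, a routine Borel--Cantelli argument then shows that $\nu$-a.e.\ $z$ assumes both values infinitely often, i.e.\ is proper; this yields the concentration claim.

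For right-invariance I would use Lemma~\ref{l:invariance}, which says $\theta$ intertwines the right actions of $G$ on $Y \times Z$ and on $S(G)$. Writing $r_g(y,z) = (y,z)\cdot g$ and $R_g(S) = S\cdot g$ for the two right-translation maps, so that $R_g \circ \theta = \theta \circ r_g$, the standard pushforward manipulation
\begin{equation*}
  (R_g)_*\mu = (R_g)_*\theta_*(\lambda\times\nu) = (\theta\circ r_g)_*(\lambda\times\nu)
    = \theta_*(r_g)_*(\lambda\times\nu)
\end{equation*}
reduces everything to the single claim that $\lambda \times \nu$ is $r_g$-invariant for each $g \in G$. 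This I would verify factor by factor. For $\lambda$: the profinite completion $\hat G$ is compact and therefore unimodular, so its Haar measure is invariant under right translation $y \mapsto yg$; moreover $Y$ is right-invariant, since $y \notin G$ forces $yg \notin G$, and $\lambda(G) = 0$, so the restriction of $\lambda$ to $Y$ remains right-invariant. For $\nu$: because $H_{n-1}$ and $H_n$ are normal, conjugation by $g$ induces an automorphism of $H_{n-1}/H_n$ fixing the identity coset $H_n$, hence permutes $A_n$; thus $z \mapsto z\cdot g$ acts simply by a permutation of the coordinate set $\bigsqcup_n A_n$, under which the fair Bernoulli measure is invariant. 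Combining the two factors gives $(r_g)_*(\lambda\times\nu) = \lambda\times\nu$, and hence $(R_g)_*\mu = \mu$.

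Neither half of the argument presents a serious obstacle, as both sit directly on top of Lemmas~\ref{l:mef} and~\ref{l:invariance}. The only two points that genuinely require care are the right-invariance of $\lambda$ restricted to $Y$, which rests on the compactness (hence unimodularity) of $\hat G$ together with $\lambda(G)=0$, and the observation that the right action on $Z$ is realized as a mere permutation of the index set $\bigsqcup_n A_n$, which is exactly where the normality of the subgroups $H_n$ is used. Everything else is bookkeeping with pushforwards of measures.
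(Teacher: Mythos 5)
Your proposal is correct and follows essentially the same route as the paper, which simply cites Lemma~\ref{l:mef} together with the fact that $\nu$-a.e.\ $z$ is proper for the concentration claim, and Lemma~\ref{l:invariance} together with the (asserted as clear) right-invariance of $\lambda\times\nu$ for the invariance claim. You have merely filled in the details the paper leaves implicit (the indices $[G:H_n]\to\infty$ so that $\bigsqcup_n A_n$ is infinite, and the factor-by-factor verification that $\lambda\times\nu$ is preserved by $r_g$), and these details are all accurate.
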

\begin{proof}
  This follows from Lemma~\ref{l:mef} and the fact that $\nu$
  concentrates on the set of proper elements of $Z$. Invariance follows
  directly from Lemma~\ref{l:invariance}.
\end{proof}

For $g \in G$, denote by $C(g)$ the \emph{centralizer} of $g$ in $G$,
i.e., the set of all elements of $G$ that commute with $g$. Let
\[
Z_f(G) = \{g\in G: C(g)\text{ has finite index in }G\}.
\]

\begin{lemma}\label{l:Zf-amenable}
  $Z_f(G)$ is an amenable, normal subgroup of $G$.
\end{lemma}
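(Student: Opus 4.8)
The plan is to treat the three assertions — subgroup, normal, amenable — in turn, after observing that $Z_f(G)$ is precisely the \emph{FC-center} of $G$ (the set of elements with finite conjugacy class), since by the orbit-stabilizer relation the conjugacy class of $g$ has cardinality $[G : C(g)]$. Thus $g \in Z_f(G)$ if and only if $g^G$ is finite.

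Normality and the subgroup property I would obtain from elementary facts about centralizers. For normality, for any $h \in G$ one has $C(hgh^{-1}) = h\, C(g)\, h^{-1}$, so $[G : C(hgh^{-1})] = [G : C(g)]$ and $Z_f(G)$ is stable under conjugation. For the subgroup property, $C(g^{-1}) = C(g)$ gives closure under inverses, and for $g_1, g_2 \in Z_f(G)$ the subgroup $C(g_1) \cap C(g_2)$ has finite index (being an intersection of two finite-index subgroups) and satisfies $C(g_1) \cap C(g_2) \sub C(g_1 g_2)$, since an element commuting with both $g_1$ and $g_2$ commutes with their product; hence $g_1 g_2 \in Z_f(G)$.

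The real content is amenability, which I would deduce from the local characterization: a group is amenable if and only if every finitely generated subgroup is amenable, as it is the directed union of these. So I fix a finitely generated subgroup $K = \langle g_1, \dots, g_k \rangle \leq Z_f(G)$ and set $N = \bigcap_{i=1}^k C(g_i)$, a finite-index subgroup of $G$. Every element of $N$ commutes with each generator, hence with all of $K$, so $N \cap K$ is central in $K$; and $[K : N \cap K] \leq [G : N] < \infty$, so the center $Z(K)$ has finite index in $K$. Therefore $K$ is an extension of the abelian (hence amenable) group $Z(K)$ by the finite group $K / Z(K)$, and since amenability is preserved under extensions, $K$ is amenable. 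As this holds for every finitely generated subgroup, $Z_f(G)$ is amenable.

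The point to get right is this last step: recognizing that the finite-index intersection of the centralizers of a finite generating set forces each finitely generated subgroup of $Z_f(G)$ to be center-by-finite, and thus amenable; the global amenability of $Z_f(G)$ then follows formally from the local nature of amenability. No quantitative estimate is required, and the only genuine subtlety is that one should argue through finitely generated subgroups rather than attempt to describe the global structure of the FC-center directly.
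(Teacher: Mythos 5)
Your proof is correct. It differs from the paper's only in how it handles the amenability step: the paper simply observes that every element of $Z_f(G)$ has a centralizer of finite index in $Z_f(G)$, so that $Z_f(G)$ is an FC-group, and then cites Leptin's theorem that FC-groups are amenable as a black box. You instead unpack that citation: you show that every finitely generated subgroup $K \leq Z_f(G)$ is center-by-finite (via the finite-index subgroup $N = \bigcap_i C(g_i)$ meeting $K$ in a central subgroup of finite index), hence amenable as an abelian-by-finite group, and conclude by the locality of amenability. This is essentially the standard proof of the FC-group case of Leptin's result (the key structural fact being B.~H.~Neumann's observation that finitely generated FC-groups are center-by-finite), so what you gain is self-containedness at the cost of a slightly longer argument; the paper gains brevity by outsourcing exactly this step. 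Your explicit verification of the subgroup and normality claims, which the paper declares ``clear,'' is also accurate, and the identification of $Z_f(G)$ with the FC-center via orbit--stabilizer is the right way to see all three assertions at once.
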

\begin{proof}
  It is clear that $Z_f(G)$ is a normal subgroup of $G$. To
  see that it is amenable, note that the centralizers of all
  elements in $Z_f(G)$ have finite index in $Z_f(G)$. Hence,
  $Z_f(G)$ has finite conjugacy classes and this implies
  amenability (Leptin~\cite{Leptin1968}).
\end{proof}

Next we choose a suitable sequence $\set{H_n}$ of finite index, normal
subgroups of $G$ that will allow us to prove the theorem. Note that
the residual finiteness of $G$ implies that for any finite index
subgroup $H\lhd G$ and $g\in G \setminus Z_f(G)$ there exists a finite
index subgroup $H' \leq H$ normal in $G$ such that for some
$C\in H/H'$ we have $gCg^{-1} \neq C$. Enumerate $G \setminus Z_f(G)$
as $\{g_n : n \in \N \}$ and construct inductively $\set{H_n:n\in\N}$
a decreasing sequence with trivial intersection such that
\begin{equation} \label{eq:Hn-conditions}
  \text{for all $m \geq n$ there exists $C \in H_m/H_{m+1}$ such that
  $g_n^{-1}Cg_n \neq C$}.
\end{equation}

\begin{proof}[Proof of Theorem~\ref{th:non-amenable}]
  We will show that the right action of $G$ on $\Top(G)$ has $\mu$-a.e
  amenable stabilizers. Recall that (see, e.g.,
  \cite[Lemma~3.6]{Hjorth2008}) if a measure-preserving action of a
  group has amenable stabilizers and induces a hyperfinite equivalence
  relation, then the group is amenable. Thus, if $G$ is non-amenable,
  we obtain that the orbit equivalence relation induced by the right
  action of $G$ on the set of Toeplitz $G$-subshifts is not
  hyperfinite and, as it is contained in the topological conjugacy
  relation, the latter is not hyperfinite either.

  By Lemma~\ref{l:Zf-amenable}, it is enough to show that there is a
  measure $1$ subset $A$ of $\Top(G)$ such that the stabilizer of
  every element in $A$ is contained in $Z_f(G)$. Since $G$ is
  countable, it suffices to see that for every
  $g \in G \setminus Z_f(G)$, the set
  \begin{equation*}
  \{(y,z) \in Y \times Z : \theta(y,z) \cdot g \neq \theta(y,z)\}
  \end{equation*}
  has measure $1$. By Lemma~\ref{l:injective}, it is enough to show
  that $\{z \in Z : z \cdot g \neq z \}$ is of measure $1$. As
  $g \notin Z_f(G)$, by \eqref{eq:Hn-conditions}, for almost all $n$,
  there is an element, say $C_n \in H_{n+1} / H_n$ such that
  $gC_ng^{-1} \neq C_n$. By the definition of the action of $G$ on
  $Z$, for any $z \in Z$ such that there exists $n$
  with $z(C_n) \neq z(g C_n g^{-1})$, we have $z \neq z \cdot g$. But the
  latter condition is clearly satisfied on a measure $1$ set. This
  completes the proof of the theorem.
\end{proof}

\begin{remark}
  Note that in case $G$ is a non-cyclic free group, the right action
  of $G$ on $\Top(G)$ is free $\mu$-a.e. because $Z_f(G)$ is trivial
  (centralizers of non-trivial elements of free groups are cyclic).
  This implies in particular that the equivalence relation of
  topological conjugacy of Toeplitz $G$-subshifts embeds a free,
  measure-preserving action of a free group.
\end{remark}


\section{The groupoid viewpoint}
\label{sec:groupoid-viewpoint}

The equivalence relation of isomorphism of subshifts is
naturally given by an action of a \emph{groupoid} rather
than a group. Recall that a \emph{groupoid} is a small
category where each arrow is invertible. We will only be
interested in countable Borel groupoids, defined as follows
(see also \cite{martino}). A \emph{countable Borel groupoid
  $\Gamma$} is a tuple $(X, A, s, r, \circ)$, where $X$ is a
standard Borel space of objects, $A$ is a standard Borel
space of arrows, $s \colon A \to X$ is a Borel map
specifying the \emph{source} of each arrow and $r \colon A
\to X$ a Borel map specifying its \emph{range}. $\circ$ is a
partial Borel map $A \times A \to A$ which represents the
composition of arrows; $f \circ g$ is defined whenever $r(g)
= s(f)$.  Of course, we require that $(X, A, \circ)$ be a
groupoid. Furthermore, we require that $r^{-1}(\set{x})$ and
$s^{-1}(\set{x})$ be countable sets for every $x \in X$. By
the Lusin--Novikov selection theorem, there exist sequences
of Borel maps $P_n \colon X \to A$ and $Q_n \colon X \to A$
such that $P_n(x)$ enumerate $s^{-1}(x)$ and $Q_n(x)$
enumerate $r^{-1}(x)$ for every $x$. If $x, y \in X$, we
will denote by $A(x, y)$ the set of arrows between $x$ and
$y$. We will sometimes identify a groupoid with the set of
its arrows as the other information can be recovered from
it.

Every countable Borel groupoid $\Gamma$ gives rise to a countable
Borel equivalence relation $E_\Gamma$ on $X$ defined by
\begin{equation*}
  x \eqrel{E_\Gamma} y \iff \exists f \in A \ s(f) = x \And r(f) = y.
\end{equation*}
Conversely, every countable equivalence relation can be viewed as
groupoid, where there is a unique arrow between every pair of
equivalent elements of $X$.

Now let $G$ be a residually finite group as before, $\set{H_n}$ be a
sequence of finite index, normal subgroups of $G$, and let $X$ be the
standard Borel space of of all Toeplitz subshifts of $G$ with m.e.f.
$\hat G = \varprojlim G/H_n$. Let $A$ be the set of all
isomorphisms between elements of $X$. Recalling that every such
isomorphism is given by a block code (which is a finite object), it is
easy to endow $A$ with a standard Borel structure so that
$(X, A, \circ)$ becomes a countable Borel groupoid ($\circ$ is just
composition of maps). The equivalence relation generated by this
groupoid is exactly isomorphism of subshifts. (All of this is defined
for arbitrary subshifts of countable groups; however, below we will
need that they be Toeplitz.)

Note that for every $S \in X$ and every $x \in S$, there exists a
unique factor map $\pi_x \colon S \to \hat G$ such that
$\pi(x) = 1_{\hat G}$. (Existence follows from the fact that if
$\pi \colon S \to \hat G$ is any $G$-map, then post-composing $\pi$
with right multiplication by $\pi(x)^{-1}$ yields a map that sends $x$
to $1_{\hat G}$.) Let $\tau \colon X \to 2^G$ be a Borel map that
selects for every $S \in X$, a Toeplitz word $\tau(S) \in S$ as given
by Proposition~\ref{p:Toeplitz-selector}. Now we can define a Borel
cocycle $\alpha_0 \colon \Gamma \to \hat G$ by
\begin{equation*}
  \alpha_0(f) = \pi_{\tau(r(f))}(f(\tau(s(f))))^{-1}.
\end{equation*}
Recall that a \emph{cocycle} is just a map $\Gamma \to \hat G$ that
satisfies $\alpha(f \circ g) = \alpha(f)\alpha(g)$. Two cocycles
$\alpha, \beta$ are \emph{cohomologous} if there exists a Borel map
$F \colon X \to \hat G$ such that
$\beta(f) = F(r(f))^{-1} \alpha(f) F(s(f))$ for all $f \in \Gamma$.
Note that changing the selection map $\tau$ transforms $\alpha_0$ into
a cohomologous cocycle. We also have a natural Borel homomorphism
$\rho \colon \Gamma \to E$ defined by
\begin{equation*}
  \rho(f) = (s(f), r(f)).
\end{equation*}




Now we specialize to the case $G = \Z$. We have the following
proposition, which basically follows from a result by Downarowicz,
Kwiatkowski, and Lacroix~\cite{dkl}.
\begin{proposition}
  Let $\Delta = \ker \alpha_0 = \set{f \in \Gamma : \alpha_0(f) = 0}$.
  Then the equivalence relation $E_\Delta$ is hyperfinite.
\end{proposition}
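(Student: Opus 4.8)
The plan is to identify the subgroupoid $\Delta = \ker\alpha_0$ concretely and then recognize $E_\Delta$ as a Borel pullback of pointed conjugacy of Toeplitz words, whose hyperfiniteness is the result of Downarowicz, Kwiatkowski and Lacroix \cite{dkl} recalled in the introduction. Since $\alpha_0$ is a cocycle, $\Delta$ is indeed a subgroupoid and $E_\Delta$ is well defined. First I would unwind the definition: for an arrow $f \in \Gamma$ with $s(f) = S$ and $r(f) = T$, we have $\alpha_0(f) = \pi_{\tau(T)}(f(\tau(S)))^{-1}$, so $f \in \Delta$ exactly when $\pi_{\tau(T)}(f(\tau(S))) = 0$, i.e. when $f(\tau(S))$ lies in the fiber $\pi_{\tau(T)}^{-1}(0)$. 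The crucial point is that $\tau(T)$ is a Toeplitz word, so by the singleton-fiber characterization of Toeplitz points used in the proof of Lemma~\ref{l:Top-comeager}, and since $\pi_{\tau(T)}(\tau(T)) = 0$, this fiber is exactly $\set{\tau(T)}$. Hence
\[
f \in \Delta \iff f(\tau(s(f))) = \tau(r(f)),
\]
that is, $\Delta$ consists precisely of those isomorphisms carrying the distinguished Toeplitz word of the source to that of the target.

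With this description in hand I would check that $\tau$ is a Borel reduction of $E_\Delta$ to the relation $E$ of pointed conjugacy on Toeplitz words, where $x \mathrel{E} y$ means that some $\Z$-equivariant homeomorphism $\cl{\Z \cdot x} \to \cl{\Z \cdot y}$ sends $x$ to $y$. If $S \mathrel{E_\Delta} T$, pick $f \in \Delta$ from $S$ to $T$; then $f(\tau(S)) = \tau(T)$, and since $\tau(S)$ is Toeplitz its orbit closure is all of the (minimal) subshift $S$, so $\cl{\Z \cdot \tau(S)} = S$ and likewise $\cl{\Z \cdot \tau(T)} = T$; thus $f$ witnesses $\tau(S) \mathrel{E} \tau(T)$. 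Conversely, any pointed conjugacy $g$ with $g(\tau(S)) = \tau(T)$ is an isomorphism $S \to T$ between elements of $X$, hence an arrow of $\Gamma$ with $\alpha_0(g) = \pi_{\tau(T)}(\tau(T))^{-1} = 0$, so $g \in \Delta$ and $S \mathrel{E_\Delta} T$. As $\tau$ is Borel by Proposition~\ref{p:Toeplitz-selector} and takes only Toeplitz values, this is the desired reduction.

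Finally, by \cite{dkl} (as recalled in the introduction) the relation $E$ of pointed conjugacy of Toeplitz $\Z$-flows is hyperfinite. Since hyperfiniteness is downward closed under Borel reducibility among countable Borel equivalence relations \cite{djk}, the reduction $\tau$ shows that $E_\Delta$ is hyperfinite. I expect the genuinely substantial input to be the hyperfiniteness of pointed conjugacy, which is imported wholesale from \cite{dkl}; everything else is bookkeeping. Accordingly, the one real obstacle is the verification that $\Delta$ equals the ``same marked word'' relation, i.e. that $\alpha_0(f) = 0$ forces $f(\tau(s(f))) = \tau(r(f))$, and this rests entirely on the singleton-fiber characterization of Toeplitz points, which I would cite precisely.
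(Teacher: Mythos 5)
Your proof is correct, and its first half --- the identification of $\Delta$ with the ``marked-word-preserving'' isomorphisms via the singleton-fiber characterization of Toeplitz points, so that $f \in \Delta$ iff $f(\tau(s(f))) = \tau(r(f))$ --- is exactly the (implicit) first equality in the paper's proof, and you justify it more carefully than the paper does. Where you diverge is in the second half: the paper does not reduce to pointed conjugacy of Toeplitz words as an external black box, but instead exhibits $E_\Delta$ directly as $\bigcup_p E_p$, where $S \eqrel{E_p} T$ iff $\hat\sigma(\tau(S)) = \tau(T)$ for some block permutation $\sigma \in \Sym(2^p)$; this is an increasing union (for $p \mid q$ one has $E_p \subseteq E_q$) of finite Borel equivalence relations, hence hyperfinite. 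The substantive input there is Theorem~1 of \cite{dkl}, which says that every pointed conjugacy of Toeplitz $\Z$-flows is induced by such a block permutation for some period. The one caveat about your route is that the hyperfiniteness of pointed conjugacy of Toeplitz flows, which you import wholesale, is not a theorem stated in \cite{dkl}: the introduction of the paper only says it ``essentially follows from their results,'' and the way one actually derives it is precisely the $\bigcup_p E_p$ decomposition that the paper's proof writes down. So your argument is sound but defers the real content to a folklore statement; making it self-contained would lead you back to the paper's argument. What your version buys in exchange is a cleaner conceptual picture ($E_\Delta$ is just pointed conjugacy pulled back along the Borel selector $\tau$) and an explicit verification of the $\ker\alpha_0$ computation that the paper leaves to the reader.
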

\begin{proof}
  For a period $p$, define the finite equivalence relation $E_p$ on
  $\Top(\Z)$ by
  \begin{equation*}
    S \eqrel{E_p} T \iff \exists \sigma \in \Sym(2^p) \ \hat
    \sigma(\tau(S)) = \tau(T).
  \end{equation*}
  It follows from \cite[Theorem~1]{dkl} that
  \begin{equation*}
    \begin{split}
    E_\Delta &= \set{(S, T) : \exists f \colon S \to T \text{
        isomorphism such that } f(\tau(S)) = \tau(T)} \\
    &= \bigcup_p E_p,
    \end{split}
  \end{equation*}
  showing that $E_\Delta$ is hyperfinite.
\end{proof}

If $S$ is a subshift, the \emph{centralizer} $C(S)$ of $S$ is the
group of automorphisms of $S$, or equivalently, the group of arrows
from $S$ to itself. Note that by our observations above, if $S \in X$,
then $C(S)$ embeds in $\hat G$ and is therefore an abelian group. So,
in some sense, the grooupoid $\Gamma$ differs from the equivalence
relation $E_\Gamma$ only a little.

We finally observe that the existence of the cocycle
$\alpha_0$ gives some restrictions on the groupoid
$\Gamma$. For example, using Popa's cocycle superrigidity
results \cite{popa}, it is easy to prove that $\Gamma$ does
not embed the groupoid of the free part of any Bernoulli
action of an infinite property (T) group. However, it is not
clear how to conclude anything from that about the
equivalence relation $E$; in particular, we do not know
whether $E$ is universal.

\bibliography{refs}
\end{document}